\theoremstyle{plain}
\newtheorem{theorem}{Theorem}[section]
\newtheorem{corollary}[theorem]{Corollary}
\newtheorem{lemma}[theorem]{Lemma}
\newtheorem{conjecture}[theorem]{Conjecture}
\newtheorem{proposition}[theorem]{Proposition}
\newtheorem{definition}[theorem]{Definition}
\newtheorem{remark}[theorem]{Remark}
\newtheoremstyle{indenteddefinition}{\topsep}{\topsep}{\addtolength{\leftskip}{2.0em}}{-0em}{\bfseries}{.}{
}{}
\theoremstyle{indenteddefinition}
\DeclareMathOperator\Ad{Ad}
\DeclareMathOperator\Aut{Aut}
\DeclareMathOperator\Hom{Hom}
\DeclareMathOperator\Gal{Gal}
\DeclareMathOperator\Int{Int}
\DeclareMathOperator\Irr{Irr}
\DeclareMathOperator\im{im}
\DeclareMathOperator{\Ker}{Ker}
\DeclareMathOperator{\SL}{SL}
\newcommand{\Ga}{\mathbb{G}_{\mathrm{a}}}
\newcommand{\sep}{\mathrm{sep}}
\begin{document}
\title{Langlands parameters for reductive groups over finite fields}
\author{Naoki Imai and David A. Vogan, Jr.} 

\date{}

\maketitle

\begin{abstract}
We define Langlands parameters for connected reductive groups over finite fields and formulate the Langlands correspondence for finite fields using these parameters. 
\end{abstract}

\section{Introduction}\label{sec:intro}
\setcounter{equation}{0}

The goal of this paper is try to formulate a parametrization of
representations of a finite Chevalley group $G({\mathbb F}_q)$ over $\overline{\mathbb{Q}}_{\ell}$ in terms of the Langlands dual group over $\overline{\mathbb{Q}}_{\ell}$ of $G$. One motivation is to relate
this parametrization for $G({\mathbb F}_q)$ to the (still
conjectural!) Langlands parametrization of irreducible representations
of groups over local fields. The reason for using the 
dual group over $\overline{\mathbb{Q}}_{\ell}$ is that Langlands' philosophy suggests that representations of a reductive group $G$ on $k=\overline k$ vector spaces ought to be related to (maps of a Weil group to) a $k$-dual group. 
Such a parametrization was given for representations of $GL_n({\mathbb
F}_q)$ by Ian Macdonald in \cite{MacZetafingen}.

For more general Chevalley groups, parametrizations
involving the dual group over $\overline{\mathbb F}_q$ may be found in
\cite{DeLuRep}, \cite{LusChred}, and \cite{LusRepdis}. A formulation using the
complex dual group is stated in \cite{LusChredICM}, but even this formulation
is not so well connected to Langlands parameters. The reason is that
the Deligne--Lusztig and Lusztig formulations are stated in terms of a
single {\em element} of the dual group. In the most fundamental and
simplest example where $G({\mathbb F}_q) = {\mathbb F}_q^\times$, a
representation is simply a character of the multiplicative group
${\mathbb F}_q^\times$. The dual group over $\overline{\mathbb F}_q$ is
$\overline{\mathbb F}_q^\times$. The special dual group elements that
Deligne and Lusztig consider in \cite{DeLuRep} and \cite{LusRepdis} are elements of
order $q-1$; that is, elements of ${\mathbb F}_q^\times$. They are
therefore seeking to parametrize {\em characters} of ${\mathbb
  F}_q^\times$ by {\em elements} of ${\mathbb F}_q^\times$. Because
the multiplicative group of a finite field is cyclic, such a
parametrization is possible, but it is never natural. The choices
required appear in \cite[(5.0.1)--(5.0.2)]{DeLuRep}: isomorphisms
\begin{equation}\begin{aligned}\label{e:choices}
\overline{\mathbb F}_q^\times &\buildrel{\sim}\over\longrightarrow ({\mathbb
Q}/{\mathbb Z})_{p'}\\
\text{roots of $1$, order prime to $p$ in $\overline{\mathbb{Q}}_{\ell}^\times$} &\buildrel{\sim}\over\longrightarrow ({\mathbb
Q}/{\mathbb Z})_{p'}.\qquad\qquad \end{aligned}
\end{equation}
The first of these choices appears also in
\cite[(8.4.3)]{LusChred}. On the right in both isomorphisms is the
additive group of elements of order prime to $p$ in ${\mathbb
Q}/{\mathbb Z}$. The isomorphisms exist essentially because all cyclic
groups of the same order are isomorphic; but they cannot be chosen
naturally. The field $\overline{\mathbb{Q}}_{\ell}$ appears in the second
because the methods of \'etale cohomology employed by Deligne and
Lusztig produce representations not on complex vector spaces but
rather on vector spaces over $\overline{\mathbb{Q}}_{\ell}$.

In this paper, we define the Weil--Deligne group for a finite field,
and use these to formulate the Langlands correspondence for the finite
field. Each fiber of this correspondence should be parametrized by
irreducible representations of a finite group attached to a Langlands
parameter, just as for other Langlands correspondences. 
In a subsequent paper by the first author, we construct the Langlands correspondence for finite
fields under the good prime assumption, and discuss a conjectural relation between the Langlands correspondence for finite fields and the categorical local Langlands correspondence. 

\subsection*{Acknowledgements}
The origin of the ideas in this paper was presented by the second author in his talk at the 
workshop ``New Developments in Representation Theory'' at IMS, Singapore in March 2016, which the first author attended. We thank the organizers of the workshop for providing that opportunity. 

\section{Langlands dual group}\label{sec:dual}
\setcounter{equation}{0}

The point of root data is that they provide a combinatorial way to
specify a reductive group. Here is a statement.

\begin{theorem} \label{thm:gpsareroots} Let $k$ be a field with separable closure
  $k^{\mathrm{sep}}$, and let $G$ and be a connected reductive algebraic group
  defined over $k$. Fix a Borel pair $(T,B)$ (not necessarily defined over $k$).  Write
  $\mathcal{BR}$ for the corresponding based root datum
  (\cite[Definition 2.10]{ABVLangclareal}).
  \begin{enumerate}
  \item There is a natural action of the Galois group $\Gamma =
  \Gal(k^{\mathrm{sep}}/k)$ on the based root datum. This action depends
  only on the inner class of the $k$-rational form $G$. The action factors
  through the Galois group of a finite Galois extension $E/k$.
  \item Suppose $T'\subset B' \subset G'$ is a Borel pair 
    in another connected reductive algebraic group over $k^{\mathrm{sep}}$, with based root datum $\mathcal{BR}'$, and that
    $$\Xi \colon \mathcal{BR} \rightarrow \mathcal{BR}'$$
    is an isomorphism of based root data, then $\Xi$ is induced by an
    isomorphism of algebraic groups over $k^{\mathrm{sep}}$
    $$\xi \colon (T\subset B \subset G) \rightarrow (T' \subset B'
    \subset G')$$
    The isomorphism $\xi$ is uniquely determined up to pre-composition
    with $\Ad(t)$ (for some $t\in T$), or post-composition with
    $\Ad(t')$  (for some $t'\in T'$).
    \item Suppose that we have pinnings ${\mathcal P}$ for $T\subset B
      \subset G$ and ${\mathcal P}'$ for $T'\subset B'\subset
      G'$. Then $\Xi$ is induced by a {\em unique} isomorphism of
      algebraic groups
      $$\xi_{{\mathcal P},{\mathcal P}'} \colon (G,{\mathcal P})
      \rightarrow (G',{\mathcal P}').$$
      \item Suppose $\mathcal{BR}''$ is any based root datum. Then
        there is a Borel pair $T'' \subset B''$, with a pinning
        ${\mathcal P}''$, in a
        reductive algebraic group $G''$ over $k^{\mathrm{sep}}$ with the
        property that the corresponding based root datum is $\mathcal{BR}''$. Because of (3), the pair $(G'',{\mathcal P}'')$ is
        unique up to a unique isomorphism.
        \item In the setting of (4), suppose in addition that
          $\mathcal{BR}''$ is endowed with an action of $\Gamma =
          \Gal(k^{\mathrm{sep}}/k)$ that factors through the Galois group
          of a finite Galois extension $E/k$. Then there is a
          unique definition of $G''$ over $k$ with the following properties:
          \begin{enumerate}
            \item The torus $T''$ is defined over $k$, and the
              corresponding action of $\Gamma$ on $X^*(T'')$ is the
              given one on $\mathcal{BR}''$.
              \item Each map $\phi_{\alpha''} \colon \SL_2 \to G''$ in the pinning
                ${\mathcal P}''$ is defined over $E$ (using the
                standard definition of $\SL_2$ over $E$).
          \end{enumerate}
          As a consequence of these properties, $B''$ is also defined
          over $k$, so that $G''$ is quasisplit.
  \end{enumerate}
\end{theorem}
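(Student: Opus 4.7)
The plan is to treat parts (2)--(4) as the classical structural results for reductive algebraic groups and to derive (1) and (5) as formal consequences. For (2)--(4) I will appeal to standard references (e.g., Springer's \emph{Linear Algebraic Groups}, or SGA3 for the $\mathbb{Z}$-model): (2) is the Isomorphism Theorem, which lifts an isomorphism of based root data to a $k^{\sep}$-isomorphism of the associated Borel pairs, unique up to $\Ad(t)$ on either side; (3) observes that this residual $T$-ambiguity is killed by fixing pinnings, so the isomorphism is then unique; and (4) is Chevalley's Existence Theorem, which produces a pinned reductive group realizing any prescribed based root datum, whose uniqueness up to a unique isomorphism is then immediate from (3).

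For (1), let $\Gamma$ act on $G_{k^{\sep}}$ through the $k$-structure. Each $\sigma\in\Gamma$ carries $(T,B)$ to another Borel pair, conjugate to $(T,B)$ by an element $g_\sigma\in G(k^{\sep})$ unique modulo $T(k^{\sep})$. The composite $\Ad(g_\sigma)\circ\sigma$ preserves $(T,B)$ and therefore induces an automorphism of $\mathcal{BR}$; the $T(k^{\sep})$-ambiguity disappears on $\mathcal{BR}$ because $\Ad(T)$ acts trivially on the root datum. The same remark shows compatibility with composition, so we obtain a genuine $\Gamma$-action on $\mathcal{BR}$. An inner twist of $G$ changes $g_\sigma$ only by further elements of $G(k^{\sep})$, leaving the induced action on $\mathcal{BR}$ unchanged, which gives the inner-class statement. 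Continuity of $\Gamma\to\Aut(\mathcal{BR})$, together with finiteness of $\Aut(\mathcal{BR})$, forces the action to factor through a finite quotient $\Gal(E/k)$.

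For (5), fix a pinned $(G'', {\mathcal P}'')$ over $k^{\sep}$ realizing $\mathcal{BR}''$ via (4). The given action of $\sigma\in\Gamma$ on $\mathcal{BR}''$ combined with the uniqueness clause in (3) produces a unique $k^{\sep}$-algebraic automorphism $\widetilde{\sigma}$ of $G''$ carrying ${\mathcal P}''$ to its $\sigma$-image. Uniqueness forces $\widetilde{\sigma\tau}=\widetilde{\sigma}\widetilde{\tau}$, so $\sigma\mapsto\widetilde{\sigma}$ is a group homomorphism $\Gamma\to\Aut(G''_{k^{\sep}})$ with image in the finite subgroup $\Aut(G'', {\mathcal P}'')\cong\Aut(\mathcal{BR}'')$; continuity transfers from $\mathcal{BR}''$ to $G''$ automatically. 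Twisting the split $k$-form (coming from the $\mathbb{Z}$-model in (4)) by this cocycle---standard Galois descent for affine schemes---yields a unique $k$-form $G''$. By construction $T''$ and every positive root subgroup are $\Gamma$-stable (a based root datum automorphism preserves the positive system), so $T''\subset B''$ descend to $k$ and the $\Gamma$-action on $X^*(T'')$ agrees with the given one; each pinning map $\phi_{\alpha''}$ is fixed by $\Gal(k^{\sep}/E)$ and is therefore defined over $E$; and the $k$-Borel $B''$ shows $G''$ is quasisplit.

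The main obstacle is the rigidity used in (5): one must know that the Galois action on $\mathcal{BR}''$ lifts not merely up to inner automorphism but on the nose to a homomorphism $\Gamma\to\Aut(G''_{k^{\sep}})$. This is precisely what (3) supplies, and it is the reason pinnings are built into the theorem. Once (3) is in hand, the descent is purely formal.
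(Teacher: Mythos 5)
The paper itself offers no proof of this theorem --- it is quoted as standard structure theory (existence and isomorphism theorems for based root data, plus descent to the quasisplit form), so there is no ``paper argument'' to match; your reconstruction follows exactly the standard route and is sound in outline. Two points deserve correction or elaboration. First, in (1) you invoke ``finiteness of $\Aut(\mathcal{BR})$,'' which is false in general: if $G$ has a central torus of rank $\geq 2$ (e.g.\ $G$ a torus), $\Aut(\mathcal{BR})$ contains $\GL_n(\mathbb{Z})$. Finiteness is not needed: choose a finite Galois extension $E/k$ over which $(T,B)$ is defined and $T$ splits; then $\Gal(k^{\mathrm{sep}}/E)$ acts trivially on $\mathcal{BR}$, so the action factors through the finite group $\Gal(E/k)$ (equivalently, the homomorphism to the discrete group $\Aut(\mathcal{BR})$ is continuous, hence has open kernel).

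Second, in (5) you only prove existence: twisting the split form by the homomorphism $\Gamma\to\Aut(G'',\mathcal{P}'')$ gives \emph{a} $k$-structure with properties (a) and (b), but the theorem asserts this $k$-structure is \emph{unique}, and that is not addressed (``yields a unique $k$-form'' refers to the descent producing one form, not to uniqueness among all forms satisfying (a),(b)). The uniqueness is the delicate half. Given any $k$-structure satisfying (a),(b), compare its semilinear action $\mu(\sigma)$ with the constructed one: for $\sigma\in\Gal(k^{\mathrm{sep}}/E)$ the two agree on $T''$ (by (a)) and on the images of the pinning maps $\phi_{\alpha''}$ (by (b)), and these generate $G''$, so they agree there. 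For general $\sigma\in\Gamma$, however, (a) only pins $\mu(\sigma)$ down up to composition with $\Ad(t)$, $t\in T''_{\mathrm{ad}}$, and condition (b) by itself constrains only the subgroup $\Gal(k^{\mathrm{sep}}/E)$; to kill this residual $T''_{\mathrm{ad}}$-ambiguity one must argue that every $\sigma\in\Gamma$ carries $\phi_{\alpha''}$ to $\phi_{\sigma(\alpha'')}$ (i.e.\ $\Gamma$ permutes the pinning, not merely that the open subgroup fixes it), using that ${}^{\sigma}\phi_{\alpha''}$ depends only on $\sigma|_E$ together with the cocycle relation. You should spell this step out; as written, the uniqueness claim in (5) is a genuine gap in the proposal.
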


\begin{corollary}\label{cor:distaut} Suppose $(G,{\mathcal P})$ is a
  connected reductive
  algebraic group with a pinning over a separably closed field
  $k^{\mathrm{sep}}$, and $\mathcal{BR}$ is the corresponding based root
  datum. Write $\Aut(G,{\mathcal P})$ for the group of algebraic
  automorphisms of $G$ preserving the pinning (in the weak sense of
  permuting the collection of maps from $\SL_2$ to $G$). Then there is
  a natural isomorphism
\begin{equation}\label{e:AutGP}
\Aut(G,{\mathcal P}) \simeq \Aut(\mathcal{BR})\colon
\end{equation}
  that is, every automorphism of the based root datum of $G$ arises
  from a unique algebraic group automorphism preserving the pinning.

  Every algebraic automorphism of $G$ differs by an inner automorphism
  from one preserving ${\mathcal P}$; and the only inner automorphism
  preserving ${\mathcal P}$ is the identity. Consequently there is a
  semidirect product decomposition
  $$\Aut(G) \simeq \Int(G) \rtimes \Aut(G,{\mathcal P}).$$
  \end{corollary}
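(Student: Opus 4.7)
The plan is to derive all three claims from Theorem~\ref{thm:gpsareroots}, especially part~(3), combined with standard structural facts about a reductive group relative to a Borel pair.

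First I would establish the isomorphism \eqref{e:AutGP}. Any element of $\Aut(G,{\mathcal P})$ preserves $(T,B)$ and therefore induces an automorphism of the based root datum, giving a homomorphism $\Aut(G,{\mathcal P})\to\Aut(\mathcal{BR})$. Conversely, given $\Xi\in\Aut(\mathcal{BR})$, Theorem~\ref{thm:gpsareroots}(3) applied with $(G',{\mathcal P}')=(G,{\mathcal P})$ produces a unique automorphism $\xi_{{\mathcal P},{\mathcal P}}$ of $(G,{\mathcal P})$ inducing $\Xi$. The uniqueness clause in Theorem~\ref{thm:gpsareroots}(3) makes the two constructions mutually inverse.

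Next, I would show that every $\phi\in\Aut(G)$ can be modified by an inner automorphism to land in $\Aut(G,{\mathcal P})$. Because any two Borel pairs of $G$ are $G$-conjugate over $k^{\sep}$, one can choose $g\in G$ with $\psi:=\Ad(g)\circ\phi$ preserving $(T,B)$, and $\psi$ then induces some $\Xi\in\Aut(\mathcal{BR})$. For each simple root $\alpha$, $\psi\circ\phi_\alpha$ is an $\SL_2$-map attached to the simple root $\Xi(\alpha)$ and therefore agrees with $\phi_{\Xi(\alpha)}$ up to the conjugation action of some element in the diagonal torus of $\SL_2$, which amounts to a scaling $c_\alpha\in(k^{\sep})^{\times}$ of the corresponding simple root vector. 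Since the simple roots are linearly independent in $X^*(T)$, the homomorphism $T\to\Gm^{\,r}$ they define is surjective on $k^{\sep}$-points, so one can choose a single $t\in T$ with $\Xi(\alpha)(t)=c_\alpha^{-1}$ for every simple $\alpha$. Then $\Ad(tg)\circ\phi$ preserves ${\mathcal P}$.

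Finally, I would verify that $\Int(G)\cap\Aut(G,{\mathcal P})=\{1\}$. If $\Ad(g)$ preserves $\mathcal{P}$ then $g$ normalizes both $T$ and $B$; since Borel subgroups are self-normalizing, $g\in B\cap N_G(T)=T$. An element of $T$ preserves every simple root space, hence cannot permute the $\phi_\alpha$, and pinning-preservation then forces $\alpha(g)=1$ for every simple root $\alpha$. Thus $g\in\bigcap_\alpha\ker\alpha=Z(G)$ and $\Ad(g)=\Id$. Combining these points, $\Aut(G,{\mathcal P})$ maps isomorphically onto $\Aut(G)/\Int(G)$, which yields the asserted semidirect product decomposition. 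The most delicate step will be the choice of a single $t\in T$ in the middle argument; linear independence of the simple roots in $X^*(T)$ is exactly what makes this possible.
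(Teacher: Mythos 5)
The paper itself gives no proof of this corollary --- it is stated as an immediate consequence of Theorem~\ref{thm:gpsareroots} --- and your derivation is essentially the intended one: part (3) of the theorem (applied with $(G',\mathcal{P}')=(G,\mathcal{P})$) gives the isomorphism \eqref{e:AutGP}, conjugacy of Borel pairs reduces an arbitrary automorphism to one preserving $(T,B)$, and the computation $N_G(B)\cap N_G(T)=T$, $\bigcap_{\alpha\in\Delta}\ker\alpha=Z(G)$ gives $\Int(G)\cap\Aut(G,\mathcal{P})=\{1\}$. One remark on economy: for the middle claim you do not need the root-vector rescaling argument at all. Once $\psi=\Ad(g)\circ\phi$ preserves $(T,B)$ and induces $\Xi$ on $\mathcal{BR}$, Theorem~\ref{thm:gpsareroots}(3) produces a pinned automorphism $\xi$ inducing the same $\Xi$, and the uniqueness clause of Theorem~\ref{thm:gpsareroots}(2) already says $\psi=\xi\circ\Ad(t)$ for some $t\in T$; hence $\phi=\Ad(g^{-1}\xi(t))\circ\xi$. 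This is the route the paper's phrasing suggests, and it sidesteps the step you yourself single out as delicate.

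That step is in fact more delicate than your justification allows, in the stated generality. Linear independence of the simple roots makes $T\to\Gm^{\Delta}$ a surjective morphism of tori, but it does \emph{not} by itself give surjectivity on $k^{\sep}$-points: the kernel is the diagonalizable group with character group $X^*(T)/\mathbb{Z}\Delta$, and when $\mathrm{char}\,k$ divides the order of its torsion the kernel is non-smooth, so over a separably closed but imperfect field the cokernel on points can be nontrivial (for $G=\SL_p$ in characteristic $p$ it is $k^{\sep\times}/(k^{\sep\times})^p$). Also, conjugation by the diagonal torus of $\SL_2$ only rescales the root vector by squares, so the scalar $c_\alpha$ is not obtained that way --- though the correction by $t\in T$ with $\Xi(\alpha)(t)=c_\alpha^{-1}$ is indeed the right mechanism. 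The repairs are standard: either cite Theorem~\ref{thm:gpsareroots}(2) as above, or interpret inner automorphisms via the adjoint group and choose $t$ in $T^{\mathrm{ad}}(k^{\sep})\cong(\Gm^{\Delta})(k^{\sep})$ (the simple roots are a basis of $X^*(T^{\mathrm{ad}})$), or note that in this paper's applications $k^{\sep}=\overline{\mathbb{F}}_q$ is algebraically closed, where surjectivity on points does hold. With one of these adjustments your argument is complete.
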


\begin{definition}\label{def:Lgpred} Suppose $G$ is a reductive algebraic group defined
over the field $k$, and that $T\subset B \subset G$ is a Borel pair in $G_{k^{\mathrm{sep}}}$; we do not require that $T$ or $B$ be defined
over $k$.  Let $\Gamma= \Gal(k^{\mathrm{sep}}/k)$ act on the based root
datum $\mathcal{BR}$ as in Theorem \ref{thm:gpsareroots}(1). Suppose
$K = K^{\mathrm{sep}}$ is another field, assumed to be
separably closed. A {\em dual group to $G$ over $K$} is a pinned
reductive algebraic group $(G^{\vee},\mathcal{P}^{\vee})$ with
based root datum equal to the dual based root datum
$$\mathcal{BR}^\vee = (X_*,\Pi^\vee,X^*,\Pi).$$
(According to Theorem \ref{thm:gpsareroots}(4), the pinned group
$(G^{\vee},\mathcal{P}^{\vee})$ is unique up to a unique
isomorphism.)

We let $\Gamma$ act on the dual based root datum $\mathcal{BR}^{\vee}$ by the inverse transpose of its action on $\mathcal{BR}$. Because of the uniqueness of $(G^{\vee},\mathcal{P}^{\vee})$, $\Gamma$ acts on $(G^{\vee},\mathcal{P}^{\vee})$ by transport of structure.  (This is Corollary
\ref{cor:distaut}.) The {\em $L$-group of $G$ over $K$}
is the semidirect product
$${}^L G = \Gal(k^{\mathrm{sep}}/k) \ltimes G^{\vee}(K).$$
This action is by algebraic automorphisms, so ${}^L G$
is a pro-algebraic group over $K$. It is the inverse limit of the
algebraic groups
$$\Gal(E/k)\ltimes G^{\vee}(K)$$
with $E$ a finite Galois extension of $k$. The $L$-group of $G$ depends only
on the inner class of the $k$-rational form $G$.
\end{definition}

The general shape of a ``Langlands conjecture'' for group
representations is
\begin{equation}\label{eq:gsLangconj}
\text{\parbox{9cm}{irreducible representations of $G(k)$ on
  $K$-vector spaces (up to equivalence) should fall into disjoint
  {\em packets} $\Pi_\phi$ indexed by Langlands parameters $\phi$ (up to
      conjugation by $G^{\vee}(K)$).}}
\end{equation}
In this conjecture, a {\em Langlands parameter} is a group homomorphism
\begin{equation}
  \phi\colon W_k \rightarrow {}^L G
\end{equation}
subject to requirements including
\begin{enumerate}
\item $\im(\phi)$ is semisimple;
\item $\phi$ is compatible with the natural projections to 
$\Gal(k^{\mathrm{sep}}/k)$. 
\end{enumerate}
In this definition of Langlands parameter, the group $W_k$ is a {\em
  Weil group} for the field $k$. (Weil groups were defined for local
and global fields in \cite{WeiSurcorpcl}. We have not tried to determine
whether Weil's motivation in that paper, a good formulation of class
field theory, can be made to suggest anything about the case of finite
fields that we are now interested in.) A Weil group is required to be
equipped with a natural homomorphism
\begin{equation}\label{e:galmap}
  \pi_k \colon W_k \rightarrow \Gal(k^{\mathrm{sep}}/k)
\end{equation}
(so that condition (2) in the definition of Langlands parameter makes sense). 

Recall that for a local field $E$, a Weil group $W_E$ is a modified
Galois group. In particular, there is always a homomorphism
\begin{equation}
\pi_E\colon W_E \rightarrow \Gal(E^{\mathrm{sep}}/E),
\end{equation}
with dense image, whose kernel is an abelian subgroup of
$W_E$. 

\section{Weil groups of finite fields}\label{sec:Frob}
\setcounter{equation}{0}
	Suppose ${\mathbb F}_q$ is a finite field, and that $\overline{\mathbb
		F}_q$ is an algebraic closure. We know that
	\begin{equation*}
		\Gamma = \Gal(\overline{\mathbb F}_q/{\mathbb F}_q) = \varprojlim_m
		{\mathbb Z}/m{\mathbb Z};
	\end{equation*}
	the generator of this group is the {\em arithmetic Frobenius}
	\begin{equation*}
		\sigma_q \colon \overline{\mathbb F}_q \rightarrow \overline{\mathbb F}_q,
		\qquad \sigma_q (x) = x^q.
	\end{equation*}

For an algebraic variety $X$ over $\mathbb{F}_q$,
let $F \colon X \to X$ be the $q$-th power geometric Frobenius morphism.
We will try to
	write $\sigma_q$ for anything related to a Galois {\em group} action (so that
	$\sigma_q$ is invertible, and typically only ${\mathbb F}_q$-linear) and $F$
	for anything related to an $\overline{\mathbb F}_q$-morphism (so often
	{\em not} invertible).

The following definition is motivated by \cite[\S 3]{MacZetafingen}.

\begin{definition}\label{def:finiteweilZ}
	We put
	\begin{equation}\label{e:finiteweil}
		I_k = \varprojlim_m {\mathbb F}_{q^m}^\times
	\end{equation}
	where the limit is taken over the norm maps
	\begin{equation}
		N_{md,m}\colon {\mathbb F}_{q^{md}}^\times \rightarrow {\mathbb
			F}_{q^m}^\times .
	\end{equation}
	We define the {\em Weil group of $k$} by
	\begin{equation}
		W_k = I_k \rtimes \langle \sigma_q \rangle
	\end{equation}
	where the conjugation by $\sigma_q$ acts on $I_k$ as $q$-th power.
\end{definition}

\begin{remark}
	There is a natural system $\{ \mathbb{F}_{q^m}^{\times} \rtimes \Gal (\mathbb{F}_{q^m}/\mathbb{F}_q) \}_{m \geq 0}$ of finite quotients of $W_k$. These look similar to $W_{\mathbb{R}}= \mathbb{C}^{\times} \rtimes \Gal (\mathbb{C} /\mathbb{R})$.
\end{remark}

\begin{definition}\label{def:finiteRedweil}
Suppose $k = {\mathbb F}_q$ is a finite field. Suppose $G$ is a reductive algebraic group over
${\mathbb F}_q$, $K$ is an algebraically closed field, and ${}^L G$ is the $L$-group of $G$ over $K$
(Definition \ref{def:Lgpred}). A {\em Weil L-parameter} is a group homomorphism
		\begin{equation}
			\phi\colon W_k\rightarrow {}^L G
		\end{equation}
such that
		\begin{enumerate}
			\item the map $\phi$ is compatible with projections to $\Gal (\overline{k}/k)$;
			\item $\im(\phi)$ is semisimple; and
			\item $\phi|_{I_k}$ factors to some finite quotient ${\mathbb
				F}_{q^m}^\times$ of $I_k$.
		\end{enumerate}

For a Weil L-parameter $\phi$, write $\phi_0$ for $\phi|_{I_k}$.
The (pointwise) {\em stabilizer} of $\phi_0$ is
		\begin{equation}\label{e:stabphi}
			Z_{G^\vee} (\phi_0 ) = \{y \in G^\vee \mid
			\Ad(y)(\phi(w)) = \phi(w) \ \ (w\in I_k)\};
		\end{equation}
this is a (possibly disconnected) equal rank reductive subgroup of
$G^{\vee}$. The Dynkin diagram of $(G^{\vee})^\phi$ is obtained from
the extended Dynkin diagram of $G^{\vee}$ by deleting a nonempty set
of vertices in each simple factor.

We say that two Weil L-parameters $\phi$ and $\phi'$ are equivalent
if the following condition is satisfied: there is $g \in G^{\vee}$ such that
$\Ad (g)(\phi_0)=\phi_0'$, and the images of $\Ad (g)(\phi (\sigma_q))$ and
$\phi' (\sigma_q)$ in ${}^L G/Z_{G^\vee} (\phi'_0 )$ are same.

We write
		\begin{equation}\label{e:Langparam}
			\Phi_{{\mathbb F}_q}(G)
		\end{equation}
for the set of equivalence classes of Weil L-parameters.
\end{definition}

\begin{definition}
We say that
\[
 \phi_0 \colon I_k \to {}^L G
\]
is an {\em inertial L-parameter} if it is the restriction of a Weil L-parameter to $I_k$.
We say that two inertial L-parameters $\phi_0$ and $\phi_0'$ are {\em equivalent} if they
are conjugate by some $g \in G^{\vee}$.
\end{definition}

\begin{proposition}\label{prop:Fqtor} A torus $T$ defined over
		${\mathbb F}_q$ is the
		same thing as a lattice automorphism
		$$\sigma_q^*\colon X^*(T) \rightarrow X^*(T)$$
		of finite order; or equivalently the inverse transpose automorphism
		$$(\sigma_q)_*\colon X_*(T) \rightarrow X_*(T), \qquad (\sigma_q)_* =
		{}^t(\sigma_q^*)^{-1}.$$
		The algebraic functions on $T$ are $\overline{k}$-linear combinations
		of the rational characters in $X^*(T)$. The geometric Frobenius
		morphism (of algebraic groups defined over ${\mathbb F}_q$) $F\colon T
		\rightarrow T$ sends the character $\lambda$ to
		$q(\sigma_q^*)^{-1}(\lambda)$.
	\end{proposition}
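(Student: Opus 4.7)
The plan is to combine the classical anti-equivalence of categories between tori over an algebraically closed field and finitely generated free abelian groups with Galois descent. Over $\overline{k}$ the torus $T_{\overline{k}}$ is determined up to unique isomorphism by its character lattice $X^*(T)$; by \'etale descent, an $\mathbb{F}_q$-form of $T_{\overline{k}}$ is the same as a continuous action of $\Gamma = \Gal(\overline{k}/\mathbb{F}_q)$ on $X^*(T)$. Since $\Gamma \cong \widehat{\mathbb{Z}}$ is topologically generated by $\sigma_q$, such a continuous action is nothing more than a single lattice automorphism $\sigma_q^*$, with continuity (equivalently, factorisation through some $\Gal(\mathbb{F}_{q^m}/\mathbb{F}_q)$) amounting to $\sigma_q^*$ having finite order. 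The cocharacter reformulation is then immediate from $\mathbb{Z}$-duality: the induced action on $X_*(T) = \Hom(X^*(T), \mathbb{Z})$ is forced to be the inverse transpose ${}^t(\sigma_q^*)^{-1}$ in order to preserve the canonical pairing with $\sigma_q^*$.

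For the statement about algebraic functions, since $T_{\overline{k}}$ is split over $\overline{k}$, its coordinate ring is the group algebra $\mathcal{O}(T_{\overline{k}}) = \bigoplus_{\lambda \in X^*(T)} \overline{k} \cdot e^\lambda$, so algebraic functions on $T$ are $\overline{k}$-linear combinations of the rational characters, as asserted.

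The Frobenius formula $F^*(\lambda) = q \cdot (\sigma_q^*)^{-1}(\lambda)$ is the most delicate part. My plan is to verify it first in the split case $T = \Gm^n$, where $\sigma_q^*$ is trivial and $F^*(t_i) = t_i^q$ by the very definition of geometric Frobenius, so $F^* = q \cdot \Id$ on $X^*(T)$; then to pass to the general case using the key fact that on $\overline{k}$-points of any variety over $\mathbb{F}_q$ the geometric Frobenius $F$ coincides with the arithmetic Frobenius $\sigma_q^T$ (both send an algebra homomorphism $\phi \colon A \to \overline{k}$ to $\sigma_q \circ \phi$). Unwinding the definition $(\sigma_q^*\lambda)(x) = \sigma_q(\lambda(\sigma_q^{-1}(x)))$ of the Galois action on $X^*(T_{\overline{k}}) = \Hom(T_{\overline{k}}, \Gm)$ and substituting $x = F(y)$ (so that $\sigma_q^{-1}(x) = y$ on points) immediately yields $(\sigma_q^*\lambda)(F(y)) = \lambda(y)^q$, i.e.\ $F^* \circ \sigma_q^* = q \cdot \Id$ on $X^*(T)$, hence the stated formula. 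The main subtlety here is purely bookkeeping: keeping the direction of each semilinear action straight and accounting correctly for the appearance of the inverse in $(\sigma_q^*)^{-1}$, which reflects the contravariance of characters with respect to the action on points.
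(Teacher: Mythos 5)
Your proposal is correct. The paper itself gives no proof of this proposition---it is stated as standard background (the content of, e.g., Carter, Prop.\ 3.1.2, or Digne--Michel)---and your argument is exactly the standard one: Galois descent identifying $\mathbb{F}_q$-forms with continuous $\Gal(\overline{k}/\mathbb{F}_q)$-actions on $X^*(T)$ (continuity $\Leftrightarrow$ finite order of $\sigma_q^*$), duality of the pairing for the cocharacter statement, and the pointwise computation for the Frobenius formula. The delicate step also checks out: with the conventions $X(\overline{k})=\Hom_{\mathbb{F}_q\text{-alg}}(A,\overline{k})$, $\sigma\cdot\phi=\sigma\circ\phi$, and $(\sigma_q^*\lambda)(x)=\sigma_q(\lambda(\sigma_q^{-1}x))$, the Frobenius endomorphism does act on $\overline{k}$-points as $\phi\mapsto\sigma_q\circ\phi$, so $(\sigma_q^*\lambda)\circ F=\lambda^q$ and hence $F^*=q(\sigma_q^*)^{-1}$ on $X^*(T)$, as asserted.
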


	We need also

	\begin{proposition}\label{prop:fintor} Suppose $T$ is a torus over
		${\mathbb F}_q$ with
		geometric Frobenius map $F\colon T\rightarrow T$. Then
		\begin{enumerate}
			\item $T(\overline{\mathbb F}_q) = \displaystyle{\bigcup_{m\ge 1}} {\mathbb
				F}_{q^m}^\times \otimes_{\mathbb Z} X_*(T)$;
			\item $T(\overline{\mathbb F}_q) = \displaystyle{\bigcup_{m\ge 1}}
			T({\mathbb F}_{q^m})
			= \displaystyle{\bigcup_{m\ge 1}} T^{F^m}$.
			\item The norm map
			$$\begin{aligned}
				N_{md,m}\colon T({\mathbb F}_{q^{md}}) &\rightarrow T({\mathbb
					F}_{q^m}),\\
				N_{md,m}(t) &= t\cdot F^m(t)\cdot F^{2m}(t) \cdots
				F^{(d-1)m}(t)
			\end{aligned}$$
			is surjective, with kernel equal to $(F^m-1)T({\mathbb F}_{q^{md}})$.
		\end{enumerate}
	\end{proposition}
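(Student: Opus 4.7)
The plan is to work throughout in the tensor product model $T(\overline{\mathbb{F}}_q) \cong X_*(T) \otimes_{\mathbb{Z}} \overline{\mathbb{F}}_q^\times$, under which a cocharacter--scalar pair $\mu\otimes a$ corresponds to the point $\mu(a)$. This identification is standard for tori, and combined with $\overline{\mathbb{F}}_q^\times = \bigcup_m \mathbb{F}_{q^m}^\times$ it gives (1) immediately, since tensor products commute with filtered colimits in the scalar factor. For (2), the second equality $T(\mathbb{F}_{q^m}) = T^{F^m}$ is definitional, so only the first needs work. Using Proposition \ref{prop:Fqtor} I would compute, via the pairing between $X^*(T)$ and $X_*(T)$, that under the tensor identification $F$ acts as $(\sigma_q)_* \otimes (\text{$q$-th power})$. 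Therefore, for any element $t = \sum_i \mu_i \otimes a_i$ with all $a_i \in \mathbb{F}_{q^m}^\times$, choosing any multiple $m'$ of $m$ that is also a multiple of the order $N$ of $(\sigma_q)_*$ yields $F^{m'}(t) = t$, so that $t \in T^{F^{m'}}$. This gives the containment $\mathbb{F}_{q^m}^\times \otimes X_*(T) \subset \bigcup_{m'} T^{F^{m'}}$, and the reverse inclusion is immediate from (1).

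For (3) I would leave the tensor model and work directly with Lang's theorem, exploiting the formal identity
\begin{equation}
N_{md,m}\cdot (F^m - 1) = F^{md} - 1
\end{equation}
in the ring $\mathbb{Z}[F]$ acting on $T$. For surjectivity, given $s \in T^{F^m}$, Lang's theorem for the connected torus $T$ with the Frobenius $F^{md}$ produces $v \in T(\overline{\mathbb{F}}_q)$ satisfying $v^{-1}F^{md}(v) = s$. Setting $u = v^{-1}F^m(v)$, a telescoping computation gives $N_{md,m}(u) = v^{-1}F^{md}(v) = s$, and a direct check using $F^m(s)=s$ shows $F^{md}(u) = u$, so $u \in T(\mathbb{F}_{q^{md}})$.

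For the kernel, the inclusion $(F^m - 1)T(\mathbb{F}_{q^{md}}) \subset \ker N_{md,m}$ follows from the identity above since $F^{md} - 1$ vanishes on $T^{F^{md}}$. Conversely, for $t \in T(\mathbb{F}_{q^{md}})$ with $N_{md,m}(t) = 1$, Lang's theorem for $T$ with Frobenius $F^m$ yields $s \in T(\overline{\mathbb{F}}_q)$ with $s^{-1}F^m(s) = t$, and an inductive calculation of $F^{jm}(s)/s$ gives $F^{md}(s)/s = N_{md,m}(t) = 1$, so $s \in T(\mathbb{F}_{q^{md}})$ as required.

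The main obstacle I expect is simply bookkeeping: matching multiplicative group operations on $T$ with the additive language of the lattice picture, and correctly identifying the action of $F$ on $X_*(T)$ versus $X^*(T)$ (the transpose/inverse acrobatics in Proposition \ref{prop:Fqtor}). The substantive input in part (3) is Lang's theorem, which applies because $T$ is connected, so neither surjectivity nor the kernel computation requires separate case analysis; once the two applications of Lang are set up with the correct Frobenius (respectively $F^{md}$ and $F^m$), both halves are purely formal telescoping arguments.
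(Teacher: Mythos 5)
Your proof is correct and agrees in substance with the paper's: the paper treats (1) as trivial, remarks for (2) only that the care needed is the non-split case (which your passage to a common multiple $m'$ of $m$ and of the order of $(\sigma_q)_*$ handles explicitly), and for (3) simply cites Carter's Proposition 3.2.2, whose content is exactly the Lang--Steinberg telescoping argument you spell out. The only real difference is that your treatment of (3) is self-contained where the paper relies on the citation.
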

	\begin{proof} The first assertion is trivial. The second
		says that every point over the algebraic closure is defined over some
		finite extension. (The only issue requiring care is that the subgroups
		defined in the first assertion are {\em not} the points over ${\mathbb
			F}_{q^m}$ unless the torus is split.) The third assertion is a
		consequence of Proposition 3.2.2 of \cite{CarFinLie}.
	\end{proof}

	\begin{corollary}\label{cor:torchar}
		Suppose $T$ is a torus over ${\mathbb F}_q$ with
		geometric Frobenius map $F\colon T\rightarrow T$, and suppose $K$ is
		an algebraically closed field. Write
		$$\widehat{T({\mathbb F}_{q^m})} = \Hom(T({\mathbb
			F}_{q^m}),K^\times)$$
		for the group of one-dimensional characters over $K$. Write $F$ also
		for the automorphism of $\widehat{T({\mathbb F}_{q^m})}$ induced by
		the geometric Frobenius.  Then
		$$\begin{aligned}\widehat{T({\mathbb F}_q)} &\simeq \text{characters of
				$T({\mathbb F}_{q^m})$ factoring through $N_{m,1}$} \\
			&\simeq \widehat{T({\mathbb F}_{q^m})}^F.\end{aligned}$$
	\end{corollary}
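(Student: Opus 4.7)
The plan is to read both isomorphisms off of Proposition~\ref{prop:fintor}(3) applied to the norm map $N_{m,1}\colon T({\mathbb F}_{q^m}) \to T({\mathbb F}_q)$, interpreting the claims through Pontryagin-style duality (for the abelian groups of $\overline{\mathbb{Q}}_\ell$-valued, or more generally $K^{\times}$-valued, characters). The cleanest route is: surjectivity of $N_{m,1}$ gives the first isomorphism, and an explicit description of $\ker N_{m,1}$ gives the second.

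First I would establish the isomorphism
\[
\widehat{T({\mathbb F}_q)} \;\simeq\; \{\chi \in \widehat{T({\mathbb F}_{q^m})} \mid \chi \text{ factors through } N_{m,1}\}.
\]
By Proposition~\ref{prop:fintor}(3) (taking the instance $d=m$, with $m$ replaced by $1$), the norm map $N_{m,1}$ is surjective. Pullback along any surjection of abelian groups induces an injection of character groups whose image is exactly the characters trivial on the kernel, i.e.\ the characters factoring through the surjection. This yields the first isomorphism with no further work.

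Next I would identify these as the $F$-fixed characters. By the same clause of Proposition~\ref{prop:fintor}(3), $\ker N_{m,1} = (F - 1) T({\mathbb F}_{q^m})$. The geometric Frobenius $F$ acts on $\widehat{T({\mathbb F}_{q^m})}$ by $(F\cdot \chi)(t) = \chi(F(t))$. For any $\chi \in \widehat{T({\mathbb F}_{q^m})}$, being trivial on $(F-1)T({\mathbb F}_{q^m})$ means $\chi(F(t)t^{-1}) = 1$ for every $t \in T({\mathbb F}_{q^m})$, which is literally the condition $F\cdot \chi = \chi$. Hence
\[
\{\chi \in \widehat{T({\mathbb F}_{q^m})} \mid \chi \text{ factors through } N_{m,1}\} \;=\; \widehat{T({\mathbb F}_{q^m})}^{F},
\]
completing the chain of isomorphisms.

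There is essentially no obstacle here beyond bookkeeping: the entire content is packaged in Proposition~\ref{prop:fintor}(3), and the only point that needs mild care is fixing the convention for the $F$-action on characters (precomposition with $F$, not with $F^{-1}$) so that $F$-fixedness becomes triviality on $(F-1)T({\mathbb F}_{q^m})$ rather than on $(F^{-1}-1)T({\mathbb F}_{q^m})$; since $(F-1)$ and $(1-F)$ have the same image in an abelian group, this is in any case harmless.
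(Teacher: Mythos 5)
Your argument is correct and is exactly the route the paper intends: the corollary is stated without proof as an immediate consequence of Proposition~\ref{prop:fintor}(3) (surjectivity of $N_{m,1}$ plus $\ker N_{m,1}=(F-1)T({\mathbb F}_{q^m})$), which is precisely what you use. The only refinement worth keeping is the one you already make: the identifications need nothing beyond elementary character-group facts valid for arbitrary $K^\times$, and the $F$-action convention (precomposition with $F$) is fixed so that $F$-invariance matches triviality on $(F-1)T({\mathbb F}_{q^m})$.
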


	Corollary \ref{cor:torchar} will be the key to formulating a Langlands
	parametrization of torus characters.

	\begin{proposition}\label{prop:torparam} Suppose $T$ is a torus over
		${\mathbb F}_q$, $K$
		is an algebraically closed field, and ${}^L T$ is the $L$-group of $T$
		over $K$. Then there is a natural bijection
		$$\Phi_{{\mathbb F}_q}(T) \longleftrightarrow \widehat{T({\mathbb F}_q)},
		\qquad \phi \longleftrightarrow \xi(\phi)$$
		between the equivalence classes of Weil L-parameters of $T$ and characters of $T({\mathbb
			F}_q)$.
	\end{proposition}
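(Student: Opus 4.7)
The plan is to reduce both sides to the same intermediate object, namely the set of $\sigma_q$-equivariant homomorphisms from some finite quotient $\mathbb{F}_{q^m}^\times$ of $I_k$ to $T^\vee(K)$. The $L$-parameter side collapses to this because $T^\vee$ is abelian, and the character side collapses to this by dualizing Proposition~\ref{prop:fintor} and then invoking Corollary~\ref{cor:torchar}.

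First I would analyze the equivalence relation on $\Phi_{\mathbb{F}_q}(T)$.  Since $T^\vee$ is abelian, the adjoint action of any $g\in T^\vee$ on $\phi_0$ is trivial, so $\Ad(g)\phi_0 = \phi_0'$ forces $\phi_0 = \phi_0'$; and $Z_{T^\vee}(\phi_0) = T^\vee$, so ${}^L T / Z_{T^\vee}(\phi_0) = \Gamma$, in which $\Ad(g)(\phi(\sigma_q))$ and $\phi'(\sigma_q)$ both map to $\sigma_q$ by condition~(1) of Definition~\ref{def:finiteRedweil}.  Hence an equivalence class is determined by $\phi_0$ alone.  Conversely, given a homomorphism $\phi_0:I_k \to T^\vee(K)$ that factors through $\mathbb{F}_{q^m}^\times$, the recipe $\phi(\sigma_q) = (t,\sigma_q)\in T^\vee(K)\rtimes \Gamma$ (for arbitrary $t$) extends $\phi_0$ to a homomorphism $W_k \to {}^L T$ precisely when the semidirect product relation $\sigma_q(\phi_0(w)) = \phi_0(w^q)$ is satisfied, i.e.\ when $\phi_0$ is $\sigma_q$-equivariant; semisimplicity of the image is automatic because every element of $T^\vee$ is semisimple.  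Thus $\Phi_{\mathbb{F}_q}(T)$ is in canonical bijection with the set of $\sigma_q$-equivariant homomorphisms $I_k \to T^\vee(K)$ that factor through some $\mathbb{F}_{q^m}^\times$.

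Next I would dualize the point-group side.  Choose $m$ divisible by the order of $\sigma_q^*$ on $X^*(T)$, so $T$ splits over $\mathbb{F}_{q^m}$; Proposition~\ref{prop:fintor}(1) then gives $T(\mathbb{F}_{q^m}) = \mathbb{F}_{q^m}^\times \otimes_{\mathbb{Z}} X_*(T)$.  Using $T^\vee(K) = \Hom(X_*(T),K^\times)$, tensor--hom adjunction identifies
\[
\widehat{T(\mathbb{F}_{q^m})} = \Hom\bigl(\mathbb{F}_{q^m}^\times,\, T^\vee(K)\bigr).
\]
A direct computation from Proposition~\ref{prop:Fqtor} shows that geometric Frobenius acts on this copy of $T(\mathbb{F}_{q^m})$ by $a\otimes \lambda_* \mapsto a^q \otimes (\sigma_q)_*(\lambda_*)$, and therefore a character $\phi_0$ on the right-hand side is $F$-fixed if and only if $\phi_0(a^q) = \sigma_q(\phi_0(a))$, where $\sigma_q$ acts on $T^\vee(K)$ contragrediently to $(\sigma_q)_*$ on $X_*(T) = X^*(T^\vee)$ --- which is exactly the $\Gamma$-action used to define ${}^L T$ in Definition~\ref{def:Lgpred}.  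Combined with the isomorphism $\widehat{T(\mathbb{F}_q)} \simeq \widehat{T(\mathbb{F}_{q^m})}^F$ of Corollary~\ref{cor:torchar}, both sides of the asserted bijection are now identified with the same set of $\sigma_q$-equivariant maps; independence from $m$ is guaranteed by the compatibility of the norm maps $N_{md,m}$ that define $I_k$.

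The main obstacle will be the sign and convention bookkeeping in the dualization step: one has to keep straight how arithmetic Frobenius on $\overline{\mathbb{F}}_q^\times$, geometric Frobenius on $T(\overline{\mathbb{F}}_q)$, the inverse-transpose action $(\sigma_q)_* = {}^t(\sigma_q^*)^{-1}$ on $X_*(T)$, and the contragredient $\Gamma$-action on $T^\vee(K)$ all fit together so that ``$F$-fixed on the $T$-side'' corresponds to ``$\sigma_q$-equivariant in the sense of the $L$-group on the $T^\vee$-side''.  Once those identifications are lined up with consistent orientation, the bijection $\xi$ is forced and manifestly natural in $T$.
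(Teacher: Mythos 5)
Your proposal is correct and takes essentially the same route as the paper: identify $\widehat{T({\mathbb F}_{q^m})}$ with $\Hom({\mathbb F}_{q^m}^\times, T^{\vee}(K))$ for $m$ divisible by the order of $\sigma_q^*$ via tensor--hom adjunction, descend to $\widehat{T({\mathbb F}_q)}$ as Frobenius-fixed characters using Corollary \ref{cor:torchar}, and match these with $\sigma_q$-equivariant homomorphisms $I_k \to T^{\vee}(K)$ factoring through a finite quotient. Your explicit verification that, because $T^{\vee}$ is abelian, an equivalence class of Weil L-parameters is determined by $\phi_0$ and that any equivariant $\phi_0$ extends, is exactly the content the paper compresses into its final identification \eqref{e:torLang}.
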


	Here both the characters and the $L$-groups are defined using the
	algebraically closed field $K$. We do {\em not} make any assumption on
	the characteristic of $K$: both sides of the bijection may be smaller
	if the characteristic of the representing field $K$ divides the order
	of $T({\mathbb F}_q)$.

	\begin{proof}
Corollary \ref{cor:torchar} provides natural inclusions
			\begin{equation}
				\widehat{T({\mathbb F}_{q^m})} \subset \widehat{T({\mathbb
						F}_{q^{dm}})}
			\end{equation}
defined by norm maps; so we can consider the increasing ``union''
			\begin{equation}
				\widehat{T({\mathbb F}_{q^\infty})} = \varinjlim_m \widehat{T({\mathbb
						F}_{q^m})}.
			\end{equation}
Here ``increasing'' for positive integers is defined by divisibility.

We use the geometric Frobenius morphism $F$ of $T$ described in
Proposition \ref{prop:fintor}. Fix a positive integer $m_0$ so that
$F^{m_0}$ is equal to multiplication by $q^{m_0}$ (on the lattice
$X^*(T)$, for example). (We can take for $m_0$ the order of the
arithmetic Frobenius automorphism $\sigma_q^*$ of Proposition
\ref{prop:Fqtor}.) According to Proposition \ref{prop:fintor}, for any
$m$ divisible by $m_0$,
			\begin{equation}
				T({\mathbb F}_{q^m}) = \Hom_{\mathbb Z}(X^*(T),{\mathbb F}^\times_{q^m}),
			\end{equation}
so
			\begin{equation}\label{e:torchar}
				\begin{aligned}
					\widehat{T({\mathbb F}_{q^m})} &= \Hom(T({\mathbb
						F}_{q^m}),K^\times)\\
					&=\Hom(\Hom_{\mathbb Z}(X^*(T),{\mathbb F}^\times_{q^m}),K^\times)\\
					&=\Hom({\mathbb F}^\times_{q^m},X^*(T)\otimes_{\mathbb Z} K^\times)\\
					&=\Hom({\mathbb F}^\times_{q^m},T^{\vee}(K))
				\end{aligned} \quad(\text{$m$ divisible by $m_0$}).\end{equation}
By \eqref{e:torchar}, we conclude
			\begin{equation}\label{e:torinftychar}
				\widehat{T({\mathbb F}_{q^\infty})} = \Hom(I_k,T^{\vee}(K)),
			\end{equation}
the continuous homomorphisms that factor to some quotient
$\mathbb{F}_{q^m}$ of $I_k$. The arithmetic Frobenius $f$ acts on
these homomorphisms by acting on the domain by $\sigma_q^{-1}$ and on the
range by $\sigma_q^*$. (To see that this is the correct action, one
can follow the Galois action of the arithmetic Frobenius $\sigma_q$ through the
isomorphisms of \eqref{e:torchar}). The fixed points of $\sigma_q^m$
under this action are the characters of $T({\mathbb F}_{q^m})$:
			\begin{equation}\label{e:torparam}
				\widehat{T({\mathbb F}_{q^m})} = \Hom(I_k,T^{\vee}(K))^{\sigma_q^m}
				\qquad (m \ge 1).
			\end{equation}
Here there is no divisibility requirement on $m$. We considered first
very divisible $m$ to get a simple computation of characters; but Corollary
\ref{cor:torchar} then gives a result for {\em all} $m \ge 1$.)

On the other hand, the right side of \eqref{e:torparam} is exactly
			\begin{equation}\label{e:torLang}
				\Hom(I_k,T^{\vee}(K))^{\sigma_q^m} = \Phi_{{\mathbb F}_{q^m}}(T)
				\qquad (m \ge 1). 
			\end{equation}
	\end{proof}

We are more or less ready to state a Langlands classification for
finite groups of Lie type based on \cite{DeLuRep}. In order to make it
more explicit, we need one more definition.

\begin{definition}\label{def:finRedrigid} In the setting of Definition
  \ref{def:finiteRedweil}, a {\em rigid Weil L-parameter} is a
  pair $(\phi,T^{\vee})$ such that
  \begin{enumerate}
    \item $\phi\colon W_k \rightarrow {}^L G$ is a
      Weil L-parameter;
    \item $T^{\vee}$ is a maximal torus in $G^{\vee}$ such that $\phi
      (I_k) \subset T^{\vee}$ and $\im \phi \subset N_{{}^L G}(T^{\vee})$.
  \end{enumerate}
We say that two rigid Weil L-parameters $(\phi,T^{\vee})$ and
$(\phi',T'^{\vee})$ are {\em equivalent} if the following condition is satisfied:
there is $g \in G^{\vee}$ such that $\Ad (g)(\phi_0)=\phi_0'$, $\Ad
(g)(T^{\vee})=T'^{\vee}$, and the images of $\Ad (g)(\phi (\sigma_q))$ and
$\phi' (\sigma_q)$ in ${}^L G/T'^{\vee}$ are same.
\end{definition}

\begin{proposition} Suppose we are in the setting of Definitions
  \ref{def:finiteRedweil} and \ref{def:finRedrigid}.
  \begin{enumerate}
  \item Any Weil L-parameter $\phi$ is equivalent to the first term of a rigid Weil L-parameter $(\phi',T^{\vee})$.
    \item Assume that two rigid Weil L-parameters $(\phi,T^{\vee})$ and $(\phi',T^{\vee})$ 
      satisfy $\phi|_{I_k}=\phi'|_{I_k}$. Then there is $g \in N_{Z_{G^{\vee}}(\phi_0)}
     (T^{\vee})$ such that $\phi'(\sigma_q)=\phi (\sigma_q)g$. They
      are equivalent if and only if $g \in N_{Z_{G^{\vee}}(\phi_0)_0} (T^{\vee})$.
  \end{enumerate}
\end{proposition}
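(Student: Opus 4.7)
For part (1), the plan is to place the image of $\phi_0$ inside a maximal torus and then conjugate $\phi(\sigma_q)$ so that it normalizes this torus. Since $I_k$ is abelian and $\im(\phi)$ is semisimple, $\phi_0(I_k)$ is a commuting family of semisimple elements, and hence lies in some maximal torus $T^{\vee}$ of $G^{\vee}$. Setting $H := Z_{G^{\vee}}(\phi_0)$, the torus $T^{\vee}$ is maximal in $H$ (and in $H^{0}$). The relation $\sigma_q w \sigma_q^{-1} = w^q$ in $W_k$ forces $\phi(\sigma_q)$ to normalize $\phi_0(I_k)$ and hence $H$, so $\phi(\sigma_q)\,T^{\vee}\,\phi(\sigma_q)^{-1}$ is another maximal torus of $H^{0}$. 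Conjugacy of maximal tori in the connected reductive group $H^{0}$ produces $z \in H^{0}$ with $z\,\phi(\sigma_q)\,T^{\vee}\,\phi(\sigma_q)^{-1}\,z^{-1} = T^{\vee}$. Setting $\phi'|_{I_k} = \phi_0$ and $\phi'(\sigma_q) = z\,\phi(\sigma_q)$ then yields a rigid pair $(\phi', T^{\vee})$: the homomorphism relation holds because $z \in H$ commutes with $\phi_0$, and $\phi \sim \phi'$ as Weil L-parameters via the identity conjugator, since $z \in H = Z_{G^{\vee}}(\phi'_0)$.

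For the first claim of part (2), $g := \phi(\sigma_q)^{-1}\phi'(\sigma_q)$ has trivial image in $\Gamma$, so $g \in G^{\vee}$. Both $\phi(\sigma_q)$ and $\phi'(\sigma_q)$ normalize $T^{\vee}$ by rigidity, so $g$ does as well. For any $w \in I_k$, using $\phi_0 = \phi'_0$ together with the homomorphism identities $\phi'(\sigma_q)\phi_0(w)\phi'(\sigma_q)^{-1} = \phi_0(w^q)$ and $\phi(\sigma_q)^{-1}\phi_0(w^q)\phi(\sigma_q) = \phi_0(w)$ yields $g\phi_0(w)g^{-1} = \phi_0(w)$, placing $g$ in $H$ and hence in $N_{H}(T^{\vee})$.

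For the second claim, unwinding the definition shows that $(\phi,T^{\vee}) \sim (\phi', T^{\vee})$ is equivalent to the existence of $h \in N_{H}(T^{\vee})$ with $\overline{g} = \overline{F^{-1}(h)h^{-1}}$ in $W := N_{H}(T^{\vee})/T^{\vee}$, where $F = \Ad(\phi(\sigma_q))$. Since $F$ preserves $H^{0}$, restricting $h$ to $N_{H^{0}}(T^{\vee})$ keeps $F^{-1}(h)h^{-1}$ inside $N_{H^{0}}(T^{\vee})$; this handles the ``$\Leftarrow$'' direction once one has surjectivity of the Lang map $h \mapsto F^{-1}(h)h^{-1}$ modulo $T^{\vee}$ onto $W^{0} := N_{H^{0}}(T^{\vee})/T^{\vee}$, while the ``$\Rightarrow$'' direction amounts to showing any equivalence conjugator can be adjusted into $N_{H^{0}}(T^{\vee})$, reflecting that $\pi_0(H)$ contributes no new equivalences beyond those coming from the identity component. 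The main obstacle is this Lang--Steinberg-type step: the classical theorem applies to Frobenius endomorphisms over $\overline{\mathbb{F}_p}$, whereas here $K$ is an arbitrary algebraically closed field and $F$ is merely an algebraic automorphism of $H^{0}$. I would handle this by passing modulo $T^{\vee}$ to a question in the finite Weyl group $W^{0}$ with its induced $F$-action, and settle the requisite surjectivity there by combinatorial means exploiting that $F$ has finite order on $W^{0}$.
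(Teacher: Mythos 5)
Your part (1) follows the paper's own argument: place the image of $\phi_0$ in a maximal torus $T^{\vee}$, observe $T^{\vee}\subset Z_{G^{\vee}}(\phi_0)$, and use conjugacy of maximal tori in the connected group $Z_{G^{\vee}}(\phi_0)_0$ to correct $\phi(\sigma_q)$ into $N_{{}^L G}(T^{\vee})$ (the paper multiplies on the right by $g_0\in Z_{G^{\vee}}(\phi_0)_0$, you on the left; this is immaterial). One repair is needed in your first step: ``a commuting family of semisimple elements lies in a maximal torus'' is false in general (the diagonal and antidiagonal involutions in $\PGL_2$ commute and are semisimple but lie in no common torus); what is true, and what the paper uses, is that $\phi_0(I_k)$ is \emph{cyclic} (it factors through some $\mathbb{F}_{q^m}^{\times}$), hence is generated by a single semisimple element and therefore lies in a maximal torus. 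The first claim of part (2) is verified correctly.

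The second claim of (2) is where your argument has a genuine gap. You interpret ``equivalent'' as equivalence of the \emph{rigid} parameters, i.e.\ comparison of Frobenius elements modulo $T^{\vee}$, and you correctly reduce that condition to: $\bar g$ lies in the image of the twisted map $\bar h\mapsto F^{-1}(\bar h)\bar h^{-1}$ on $W=N_{H}(T^{\vee})/T^{\vee}$, where $H=Z_{G^{\vee}}(\phi_0)$ and $F=\Ad(\phi(\sigma_q))$. But the surjectivity you then defer to a ``Lang--Steinberg-type step on the finite group'' is false, and no combinatorial argument can save it: for a finite group with automorphism $F$, the fibers of $h\mapsto F^{-1}(h)h^{-1}$ are cosets of the fixed subgroup, so the image has $|W|/|W^{F}|$ elements; in particular, if $F$ induces the identity on $W$ the image is $\{1\}$. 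This situation occurs: take $G$ split with $G^{\vee}=\SL_2$, $\phi_0$ trivial, $\phi(\sigma_q)=\sigma_q$ (trivial $G^{\vee}$-component, so $F$ acts trivially) and $\phi'(\sigma_q)=\sigma_q n_0$ with $n_0\in N_{G^{\vee}}(T^{\vee})$ a representative of the nontrivial Weyl element. Then $g=n_0\in N_{Z_{G^{\vee}}(\phi_0)_0}(T^{\vee})$, yet $(\phi,T^{\vee})$ and $(\phi',T^{\vee})$ are not equivalent in the sense of Definition~\ref{def:finRedrigid}, since $h\sigma_q h^{-1}=\sigma_q\notin\sigma_q n_0T^{\vee}$ for every $h\in N_{G^{\vee}}(T^{\vee})$; indeed by Proposition~\ref{prop:redtorparams} these two rigid parameters correspond to the split and the nonsplit torus of $G(k)$ with trivial character, which are genuinely different. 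So the statement cannot be proved along your lines because, read as rigid equivalence modulo $T^{\vee}$, it is not what is being asserted: ``they are equivalent'' refers to $\phi$ and $\phi'$ as Weil L-parameters, where by Definition~\ref{def:finiteRedweil} the Frobenius elements are compared modulo the centralizer of $\phi_0$ rather than modulo $T^{\vee}$; with that reading the verification is a short coset computation (this is why the paper says only that the second assertion ``follows easily from the definition'' and gives no further argument), and no Lang-type surjectivity enters. Your ``$\Rightarrow$'' direction is likewise only a restatement of what must be shown, not a proof.
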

\begin{proof}
For the first, the subgroup
  $\phi(I_k)\subset G^{\vee}$ is cyclic and
  semisimple, and therefore contained in a maximal torus $T^{\vee}$
  of $G^{\vee}$; then automatically $T^{\vee} \subset Z_{G^{\vee}}(\phi_0)$. Then
  $$\Ad(\phi(\sigma_q)^{-1})(T^{\vee}) = T_0^{\vee}$$
  is another torus of $Z_{G^{\vee}}(\phi_0)$; so there is an
  element $g_0 \in Z_{G^{\vee}}(\phi_0)_0$ so that
  $$\Ad(g_0)(T^{\vee}) = T_0^{\vee}.$$ 
If we define $g=\phi(\sigma_q) g_0$, then the conclusion is that
$$\Ad(g)(T^{\vee}) = T^{\vee},$$ 
so that $g\in N_{{}^L G}(T^{\vee})$. Because $g_0 \in G^{\vee}$, $g$
has the same image $\sigma_q \in \Gal(\overline{k}/k)$ as $\phi(\sigma_q)$. Similarly,
because $g_0$ centralizes the image of $\phi_0$, we have
$$\Ad(g)(\phi(w)) = \Ad(\phi(\sigma_q))(\phi(w)) = \phi(w)^q$$ for $w \in I_k$. 
We define $\phi'$ by $\phi'|_{I_k}=\phi|_{I_k}$ and 
$\phi' (\sigma_q) =g$. 
Then $(\phi',T^{\vee})$ is a rigid Weil L-parameter, proving
(1).

The second assertion follows easily from the definition. 
\end{proof}

Because it is so central to this paper, we essentially repeat the
proof of the proposition by explaining how to list all rigid Langlands
parameters. Write
  \begin{equation}
    T^{\vee}_0 \subset B_0^{\vee} 
  \end{equation}
  for the Borel pair specified by
  the pinning in the definition of $G^{\vee}$ (Definition
  \ref{def:Lgpred}). Since equivalence of rigid parameters is
  conjugation by $G^{\vee}$, every rigid parameter has a
  representative with torus part $T^{\vee}_0$; so we seek to
  enumerate these. Now
  \begin{equation}\label{e:extW}\begin{aligned}
    N_{{}^L G}(T^{\vee}_0) &= \Gal(\overline k/k) \ltimes N_{{}^\vee
      G}(T^{\vee}_0), \\
    N_{{}^L G}(T^{\vee}_0)/T^{\vee}_0 &=
    \Gal(\overline k/k) \ltimes W(G^{\vee},T^{\vee}_0).
    \end{aligned}
  \end{equation}
  We write
  \begin{equation}
    W = W(G^{\vee},T^{\vee}_0). 
  \end{equation}

It is now clear that equivalence classes of rigid Weil L-parameters
are exactly the same thing as $W$-orbits of
pairs
\begin{equation}\label{frobeq}
  (\phi_0,x_0), \qquad \phi_0\colon I_k \rightarrow T^{\vee}_0, \quad
  x_0\in \sigma_q \cdot W \subset \Gal(\overline k/k)\ltimes W
\end{equation}
subject to the requirement that
\begin{equation}
  \Ad(x)(\phi_0(w)) = \phi_0(w^q),
  \end{equation}
\begin{equation}\label{e:frobeqm}
  \Ad(x^m)(\phi_0(w)) = \phi_0(w^{q^m}). 
  \end{equation}
If $m_0$ is a positive integer divisible by the order of every element
of $\sigma_q W$, then \eqref{e:frobeqm} implies that the image of $\phi_0$
consists of elements of order dividing $q^{m_0} - 1$. This means in
particular that $\phi_0$ must factor to ${\mathbb F}^\times_{q^{m_0}}$. If we
choose a multiplicative generator $\eta$ of this group, then $\phi_0$
is determined by the single element $y_0 = \phi_0(\eta)\in T^{\vee}$,
which is required only to satisfy
\begin{equation}\label{e:frobeq2}
  x_0y_0x_0^{-1} = y_0^q.
\end{equation}
That is, {\em for each element $x_0\in \sigma_q \cdot W$, the set of rigid Langlands
parameters $(\phi_0,x_0)$ may be identified with the finite subgroup of
elements $y_0\in T^{\vee}$ (necessarily of order dividing $q^m-1$)
satisfying \eqref{e:frobeq2}}.

Therefore the equivalence classes of rigid Weil L-parameters are
partitioned by $W$-conjugacy classes in the coset $\sigma_q \cdot W$; and if a
conjugacy class has representative $x_0$, then the corresponding set
of parameters may be labelled (not canonically) by the finite group
defined by \eqref{e:frobeq2}.

With these explicit descriptions of L-parameters in hand, we
can relate them to our finite Chevalley group $G(k)$.

\begin{proposition}\label{prop:redtorparams}
  Suppose $G$ is a connected reductive algebraic group defined over
  the finite field $k={\mathbb F}_q$, $B_0\subset G$ is a Borel
  subgroup defined over $k$, and $T_0\subset B_0$ is a maximal torus
  defined over $k$. Let $\Gal(\overline k/k)$ act on $W(G,T_0)$ as in
  Theorem \ref{thm:gpsareroots}, and form the semidirect product
  $$ \Gal(\overline k/k) \ltimes W(G,T_0).$$
  Fix also a second algebraically closed field $K$, over which we
  define $L$-groups and represent $k$-groups.
  \begin{enumerate}
    \item\label{en:semidiso} 
    The semidirect product above is naturally isomorphic to
      \eqref{e:extW}.
      \item\label{en:ratconjmax}
       The $G(k)$-conjugacy classes of maximal tori in $G$
        defined over $k$ are naturally indexed (by the Frobenius
        action) by $W$-conjugacy classes in $\sigma_q \cdot W$. 
        This bijection is given by sending $\Ad (g) T_0$ to the image of $g^{-1} F(g) \in N_{G}(T_0)$ in $W \cong \sigma_q \cdot W$. 
        \item\label{en:isoWeylstab} 
        If $x_0\in \sigma_q \cdot W$ and $T_{x_0}$ is a corresponding
          maximal torus defined over $k$, then there is a natural
          isomorphism
          $$N_{G}(T_{x_0})(k)/T_{x_0}(k) \simeq W(G^{\vee},T^{\vee})^{x_0}.$$
        \item\label{en:bijTrig}
         In the setting of \ref{en:isoWeylstab}, there is a natural
          bijection
   \begin{align*}
   &\widehat{T_{x_0}(k)}/(N_{G}(T_{x_0})(k)/T_{x_0}(k)) 
   \\ 
   &\simeq
         \{ \text{rigid Weil L-parameters $(\phi,\widehat{T})$ such that $\phi(\sigma_q)$ is a lift of $x_0$} \}/{\sim} . 
             \end{align*} 
\item\label{en:geomconjineL} 
In the bijection of \ref{en:bijTrig}, suppose two characters $\theta_1\in
  \widehat{T_{x_1}(k)}$ and $\theta_2\in
  \widehat{T_{x_2}(k)}$ correspond to the rigid parameters
  $(\phi_i,x_i)$. Then the pair are geometrically conjugate
  (\cite[Definition 5.5]{DeLuRep}) if and only if the inertial L-parameters $\phi_1|_{I_k}$ and $\phi_2|_{I_k}$ are equivalent.
  \end{enumerate}
\end{proposition}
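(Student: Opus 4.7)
The plan is to work through (1)--(5) in order, using Lang--Steinberg throughout and the explicit enumeration of rigid parameters carried out in \eqref{frobeq}--\eqref{e:frobeq2}. Part (1) is essentially tautological: both $W(G,T_0)$ and $W(G^\vee,T_0^\vee)$ are canonically the Weyl group of the common based root datum $\mathcal{BR}$, and by Theorem \ref{thm:gpsareroots}(1) and Definition \ref{def:Lgpred} both Galois actions descend to the same abstract action on $W(\mathcal{BR})$, so the two semidirect products coincide canonically. For (2), any $F$-stable maximal torus has the form $T=\Ad(g)T_0$ for some $g\in G(\overline{k})$ by Lang--Steinberg, and $F$-stability forces $n:=g^{-1}F(g)\in N_G(T_0)$, with image $w_T\in W(G,T_0)$ well-defined modulo $F$-conjugation in $W$. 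Since $F$-conjugacy classes in $W$ are the same as ordinary $W$-conjugacy classes in the coset $\sigma_q\cdot W$, this yields the desired bijection (classical; cf.\ Carter \cite{CarFinLie}, \S 3.3).

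For (3), the isomorphism $\Ad(g^{-1})\colon T_{x_0}\to T_0$ intertwines $F$ on $T_{x_0}$ with $\Ad(n)\circ F$ on $T_0$, and likewise on normalizers; applying Lang--Steinberg to the connected torus $T_0$ then gives $N_G(T_{x_0})(k)/T_{x_0}(k)=W(G,T_0)^{\Ad(w_T)F}$, which equals the centralizer $W^{x_0}$ of $x_0=\sigma_q\cdot w_T$ in $W$, and (1) identifies this with $W(G^\vee,T^\vee)^{x_0}$. For (4), apply Proposition \ref{prop:torparam} to $T_{x_0}$: characters of $T_{x_0}(k)$ correspond to Weil L-parameters $W_k\to {}^L T_{x_0}$, equivalently to homomorphisms $\phi_0\colon I_k\to T_{x_0}^\vee$ fixed by the appropriate twisted Frobenius. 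Identifying $T_{x_0}^\vee$ with $T^\vee_0\subset G^\vee$, these are exactly the inertial data of a rigid Weil L-parameter with $\phi(\sigma_q)$ lifting $x_0$ (compatibility being \eqref{e:frobeq2}), and (3) matches the two equivalence relations on the nose.

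For (5), unwind the Deligne--Lusztig definition: geometric conjugacy of $(T_{x_1},\theta_1)$ and $(T_{x_2},\theta_2)$ means that for some divisible $m$ there is $g\in G(\overline{k})$ such that transport by $g$ and pullback by the norm $N_{m,1}$ equates the $\theta_i$. Under the bijection of (4), this operation exactly forgets the Frobenius datum $\phi(\sigma_q)$ while retaining $\phi|_{I_k}$, and $G(\overline{k})$-conjugacy of tori translates to $G^\vee$-conjugacy of inertial L-parameters. The main obstacle will be part (5): matching the norm-map reformulation of geometric conjugacy in \cite[\S 5]{DeLuRep} with $G^\vee$-conjugacy of inertial L-parameters requires a careful comparison and a check that all identifications are natural in the choice of $g$ and the auxiliary integer $m$. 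Parts (1)--(3) are routine Lang--Steinberg bookkeeping, and (4) is a direct consequence of (3) together with Proposition \ref{prop:torparam}.
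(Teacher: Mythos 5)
Your proposal follows the same route as the paper: (1) is read off from the construction of the $L$-group, (2) and (3) are the standard Lang--Steinberg facts (the paper simply cites \cite[Corollary 1.14]{DeLuRep} and \cite[Proposition 4.4.1]{DiMiRepLie2nd} for the chain $N_G(T_{x_0})(k)/T_{x_0}(k)\simeq W(G,T_{x_0})^F\simeq W(G,T_0)^{x_0}\simeq W(G^\vee,T^\vee)^{x_0}$ that you re-derive), (4) is Proposition \ref{prop:torparam} applied to $T_{x_0}$, and (5) is handled exactly as you propose, via the norm maps of Proposition \ref{prop:fintor}: the pair $(T_{x_i}(\mathbb{F}_{q^m}),\theta_i\circ N)$ corresponds to the restricted rigid parameter, and taking $m$ with $\Ad(x_i^m)$ trivial on $T^\vee$ makes equivalence over $\mathbb{F}_{q^m}$ coincide with equivalence of the inertial parameters $\phi_i|_{I_k}$, so the ``careful comparison'' you flag is resolved by the same choice of $m$ the paper makes.
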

\begin{proof} The claim \ref{en:semidiso} follows from the construction of the
  $L$-group in Definition \ref{def:Lgpred}. The claim \ref{en:ratconjmax} is
  \cite[Corollary 1.14]{DeLuRep}. 
As for \ref{en:isoWeylstab}, we have 
$$N_{G}(T_{x_0})(k)/T_{x_0}(k) \simeq W(G,T_{x_0})^F \simeq W(G,T_0)^{x_0} \simeq W(G^{\vee},T^{\vee})^{x_0}$$
by \cite[Proposition 4.4.1]{DiMiRepLie2nd}. 
  The claim \ref{en:bijTrig} is Proposition
  \ref{prop:torparam}. For the claim \ref{en:geomconjineL}, suppose $m\ge 1$; consider the
  (surjective) norm homomorphisms
  $$N\colon T_{x_i}({\mathbb F}_{q^m}) \rightarrow T_{x_i}({\mathbb F}_q)$$ of
  Proposition \ref{prop:fintor}. In the bijections of (3), the pairs
  $(T_i({\mathbb F}_{q^m}),\theta_i\circ N)$ clearly correspond to the
  rigid Weil L-parameters $(\phi_i|_{W_{\mathbb{F}_{q^m}}},\widehat{T})$. We choose $m$
  so that $\Ad(x_i^m)$ is
  trivial on $T^{\vee}$; so equivalence of the rigid Weil L-parameters is
  the same as equivalence of $\phi_1|_{I_k}$ and $\phi_2|_{I_k}$ by \ref{en:bijTrig}. 
 \end{proof}

The proposition says that equivalence classes of rigid Weil L-parameters are in one-to-one correspondence with $G(k)$-conjugacy
classes of pairs $(T,\theta)$, with $T$ a maximal torus in $G$ defined
over $k$, and $\theta \in \widehat{T(k)}$. A version of this is in
\cite[(5.21.5)]{DeLuRep}.

The main results of \cite{DeLuRep} concern the case
\begin{equation}
  K = \overline{\mathbb Q}_\ell,
\end{equation}
with $\ell$ any prime not equal to $p$. In that setting, Deligne and
Lusztig define a virtual $K$-representation $R_{T_{x_0}}(\theta)$ of
$G(k)$ for every $(T_{x_0},\theta)$ as in the proposition.

Here is a way to write the Deligne--Lusztig results as a Langlands
classification for finite groups of Lie type.

\begin{theorem}[Deligne--Lusztig \cite{DeLuRep}]\label{thm:DL} Suppose $G$
  is a connected reductive algebraic group defined over the finite
  field $k={\mathbb F}_q$. Consider
  $$ K = \overline{\mathbb Q}_\ell,$$
  with $\ell$ any prime not equal to $p$. Suppose $(\phi,T^{\vee})$
  is a rigid Weil L-parameter (Definition
  \ref{def:finRedrigid}). Let $(T,\theta)$ be a corresponding pair
  consisting of a maximal torus defined over $k$ and a
  $K^\times$-valued character of $T(k)$ (Proposition
  \ref{prop:redtorparams}(3)). Define
  $$R_T(\theta) = \text{virtual $K$-representation of $G(k)$}$$
  as in \cite{DeLuRep}.
  \begin{enumerate}
  \item The virtual representations $R_{T_1}(\theta_1)$ and
    $R_{T_2}(\theta_2)$ have irreducible summands in common
    only if $(T_1,\theta_1)$ and $(T_2,\theta_2)$ are geometrically
    conjugate; that is, only if the corresponding rigid Weil L-parameters have equivalent underlying inertial L-parameters.
  \item Every irreducible $G(k)$ representation over $K$ appears as an
    irreducible summand of some $R_T(\theta)$.
  \end{enumerate}
\end{theorem}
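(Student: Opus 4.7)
The plan is to reduce both statements to existing theorems of Deligne--Lusztig by translating through the dictionary established in Proposition \ref{prop:redtorparams}. The theorem is not really a new assertion: it is a repackaging of results from \cite{DeLuRep} in the Langlands-parameter language developed in this section. So the proof is essentially a citation plus a bookkeeping verification.

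First, I would observe that by Proposition \ref{prop:redtorparams}(4), every rigid Weil L-parameter $(\phi, T^\vee)$ gives rise to a well-defined $G(k)$-conjugacy class of pairs $(T, \theta)$, where $T$ is an $k$-rational maximal torus and $\theta \in \widehat{T(k)}$. Since the virtual representation $R_T(\theta)$ of \cite{DeLuRep} depends (up to isomorphism) only on the $G(k)$-conjugacy class of $(T,\theta)$, the definition of $R_T(\theta)$ attached to the rigid parameter is unambiguous. I would state this reduction at the start and then treat the two claims separately.

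For (1), I would cite the orthogonality/disjointness theorem of \cite[Theorem 6.2, Corollary 6.3]{DeLuRep}: if the pairs $(T_1,\theta_1)$ and $(T_2,\theta_2)$ are not geometrically conjugate, then the virtual characters $R_{T_1}(\theta_1)$ and $R_{T_2}(\theta_2)$ are orthogonal, hence share no irreducible constituent. The contrapositive yields the first half of (1). To get the stated equivalence with inertial L-parameters, I would invoke Proposition \ref{prop:redtorparams}(5), which identifies geometric conjugacy with equivalence of the underlying inertial L-parameters $\phi_i|_{I_k}$. For (2), I would cite the exhaustion theorem of \cite[Corollary 7.7]{DeLuRep}: every irreducible $\overline{\mathbb{Q}}_\ell$-representation of $G(k)$ appears as an irreducible summand in some $R_T(\theta)$.

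The hard part, such as it is, is not in the proof itself but in the translation layer: one must be confident that the ``$(\phi, T^\vee)$ with fixed $\phi|_{I_k}$ up to equivalence'' notion matches exactly the ``geometrically conjugate $(T,\theta)$'' notion of Deligne--Lusztig. That compatibility was already carried out in the proof of Proposition \ref{prop:redtorparams}(5) via the intermediate step of looking at $(T({\mathbb F}_{q^m}), \theta\circ N)$ for sufficiently divisible $m$ and comparing with $\phi|_{W_{\mathbb{F}_{q^m}}}$. Once that dictionary is in hand, both assertions of the theorem are direct consequences of the corresponding statements in \cite{DeLuRep}.
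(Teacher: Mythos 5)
Your proposal is correct and matches the paper's treatment: the theorem is stated as a citation of Deligne--Lusztig, with the disjointness and exhaustion results of \cite{DeLuRep} translated into parameter language exactly through Proposition \ref{prop:redtorparams}(4) and (5), which is precisely the dictionary you invoke. No further argument is needed.
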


\begin{definition}\label{def:Lpkt} In the setting of Theorem 5.9 (so
  that $K=\overline{\mathbb Q}_\ell$) write $\Pi(G(k))$ for the set of
  irreducible $K$-representations of $G(k)$.

  Suppose $\phi_0$ is an inertial L-parameter. The {\em $L$-packet of
    $\phi_0$} is
  $$\Pi_{\phi_0}(G(k)) = \{\pi \in \Pi(G(k)) \mid \text{$\pi$ appears in
  $R_T(\theta)$}\}$$
 for some character $\theta$ of some rational torus
 corresponding to a rigid Weil L-parameter $(\phi,T^{\vee})$ such that $\phi|_{I_k}$ is equivalent to $\phi_0$. 
\end{definition}

According to Theorem \ref{thm:DL}, the $L$-packets
partition $\Pi(G(k))$. 

What we want next is a more explicit description of the packets
$\Pi_{\phi_0}$. To begin, we ask how large these packets are.
\begin{theorem}[Deligne--Lusztig {\cite[Theorem 6.8]{DeLuRep}}]\label{thm:DLint}
In the setting of Theorem \ref{thm:DL}, write the decomposition of the
virtual representation $R_T(\theta)$ into irreducible representations
as
$$R_T(\theta) = \sum_{\pi \in \Pi_{\phi_0}(G(k))} m(\pi)\pi,$$
with each multiplicity $m(\pi)$ an integer. Then
$$\sum_{\pi\in \Pi_{\phi_0}(G(k))} m(\pi)^2 = |(W(G,T)^F)^{\theta}| =
|W((G^{\vee})^{\phi_0},T^{\vee})^x|.$$
Here the two Weyl groups appearing are identified by Proposition
\ref{prop:redtorparams}(3).
\end{theorem}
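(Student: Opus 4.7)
The plan is to derive both equalities from results already in hand. For the first equality, I would use that each $m(\pi)$ is an integer and the $\pi$'s are pairwise inequivalent irreducibles, so by orthonormality of irreducible characters
$$\sum_{\pi \in \Pi_{\phi_0}(G(k))} m(\pi)^2 \;=\; \langle R_T(\theta), R_T(\theta)\rangle_{G(k)}.$$
The Deligne--Lusztig inner product formula, specialized to the diagonal case $(T_1,\theta_1) = (T_2,\theta_2) = (T,\theta)$, then identifies this with
$$|\{w \in W(G,T)^F \colon w\cdot \theta = \theta\}| \;=\; |(W(G,T)^F)^{\theta}|,$$
which is the first equality and is essentially the content of \cite[Theorem 6.8]{DeLuRep} itself.

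For the second equality, I would translate the stabilizer across the identifications of Proposition~\ref{prop:redtorparams}. By part~(\ref{en:isoWeylstab}) of that proposition there is a canonical isomorphism $W(G,T_{x_0})^F \simeq W(G^\vee, T^\vee)^{x_0}$, and by Proposition~\ref{prop:torparam} the character $\theta$ corresponds to the restricted homomorphism $\phi_0 = \phi|_{I_k} \colon I_k \to T^\vee(K)$. Both identifications are equivariant for the natural Weyl-group actions, so the subgroup fixing $\theta$ on the left corresponds to the $x_0$-fixed part of the subgroup of $W(G^\vee, T^\vee)$ fixing $\phi_0$. Now a Weyl element $w$ fixes $\phi_0$ pointwise iff some lift $n \in N_{G^\vee}(T^\vee)$ centralizes the image $\phi_0(I_k)$, i.e.\ lies in $N_{Z_{G^\vee}(\phi_0)}(T^\vee)$; since $T^\vee \subset Z_{G^\vee}(\phi_0)$ is a maximal torus, this stabilizer is exactly $W(Z_{G^\vee}(\phi_0), T^\vee) = W((G^\vee)^{\phi_0}, T^\vee)$. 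Taking $x_0$-fixed points gives
$$(W(G,T_{x_0})^F)^{\theta} \;\simeq\; W((G^\vee)^{\phi_0}, T^\vee)^{x_0},$$
as required.

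The main obstacle is verifying the Weyl-equivariance of the bijection in Proposition~\ref{prop:torparam}: one must trace the chain of identifications
$$\widehat{T(k)} \;\simeq\; \widehat{T({\mathbb F}_{q^m})}^F \;\simeq\; \Hom({\mathbb F}^\times_{q^m}, T^\vee(K))^{\sigma_q^m} \;\simeq\; \Hom(I_k, T^\vee(K))^{\sigma_q}$$
and confirm that the natural $W(G,T_{x_0})^F$-action on $\widehat{T_{x_0}(k)}$ matches the $W(G^\vee, T^\vee)^{x_0}$-action on inertial L-parameters. This is straightforward from the naturality of the norm and duality isomorphisms, but it requires some care about Frobenius versus inverse-Frobenius conventions, of exactly the kind that appear in the proof of Proposition~\ref{prop:torparam}. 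Once this naturality is in place, the theorem reduces to citing \cite[Theorem 6.8]{DeLuRep} together with the combinatorial identification of stabilizers above.
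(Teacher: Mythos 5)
Your proposal is correct and follows the same route the paper intends: the paper gives no independent argument for this theorem, treating it as a restatement of the Deligne--Lusztig inner product formula \cite[Theorem 6.8]{DeLuRep} combined with the Weyl-group identification of Proposition~\ref{prop:redtorparams}(3). Your unwinding of $\sum m(\pi)^2$ as the self-pairing of $R_T(\theta)$ and your translation of the stabilizer $(W(G,T)^F)^{\theta}$ into $W((G^{\vee})^{\phi_0},T^{\vee})^{x_0}$ (via the equivariance of the parametrization in Proposition~\ref{prop:torparam}, which you rightly flag as the only point needing care) is exactly what is implicit in the paper's citation.
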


\begin{corollary}\label{cor:DLirr} In the setting of Theorem
  \ref{thm:DL}, suppose that the stabilizer $(G^{\vee})^{\phi_0}$ of the
  inertial L-parameter $\phi_0$ is a maximal torus in $G^{\vee}$. Then $\Pi_{\phi_0}(G(k))$ is a single irreducible representation,
  namely $\pm R_T(\theta)$.
\end{corollary}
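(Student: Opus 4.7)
The plan is to combine Theorem~\ref{thm:DLint} with the structural description of rigid Weil L-parameters sharing a fixed inertial part. The hypothesis that $(G^\vee)^{\phi_0}=T^\vee$ is a maximal torus will simultaneously force both the uniqueness of the rigid parameter up to equivalence and the triviality of the Weyl group stabilizer appearing on the right-hand side of the multiplicity formula.

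First I would observe that any rigid Weil L-parameter $(\phi,T'^\vee)$ with $\phi|_{I_k}$ equivalent to $\phi_0$ may, after $G^\vee$-conjugation, be taken to satisfy $\phi|_{I_k}=\phi_0$ exactly; then $T'^\vee\subseteq Z_{G^\vee}(\phi_0)=T^\vee$, forcing $T'^\vee=T^\vee$ by equality of rank. Next, by part~(2) of the proposition following Definition~\ref{def:finRedrigid}, two rigid parameters $(\phi,T^\vee)$ and $(\phi',T^\vee)$ with the same restriction to $I_k$ differ by $\phi'(\sigma_q)=\phi(\sigma_q)g$ with $g\in N_{Z_{G^\vee}(\phi_0)}(T^\vee)=T^\vee$, and they are equivalent as soon as $g\in N_{Z_{G^\vee}(\phi_0)_0}(T^\vee)=T^\vee$ (using that a torus is connected). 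Hence all rigid Weil L-parameters with inertial part equivalent to $\phi_0$ form a single equivalence class, corresponding via Proposition~\ref{prop:redtorparams}\ref{en:bijTrig} to one $G(k)$-conjugacy class of pairs $(T,\theta)$. Therefore the packet $\Pi_{\phi_0}(G(k))$ is built entirely from one virtual representation $R_T(\theta)$.

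Second, Theorem~\ref{thm:DLint} gives
\[
\sum_{\pi\in \Pi_{\phi_0}(G(k))} m(\pi)^2 \;=\; \bigl|W((G^\vee)^{\phi_0},T^\vee)^x\bigr|.
\]
Under the hypothesis $(G^\vee)^{\phi_0}=T^\vee$, the right-hand Weyl group is $W(T^\vee,T^\vee)=\{1\}$, so the sum equals $1$. Combined with the previous paragraph, this forces $R_T(\theta)=\pm\pi$ for a single irreducible $\pi$ and $\Pi_{\phi_0}(G(k))=\{\pi\}$, which is exactly the claim.

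No step presents a serious obstacle; the only point requiring care is the initial reduction from ``$\phi'|_{I_k}$ equivalent to $\phi_0$'' to ``$\phi'|_{I_k}=\phi_0$'' before applying part~(2) of the proposition on rigid parameters, so that the L-packet is genuinely built from one Deligne--Lusztig virtual character rather than from several a priori distinct ones.
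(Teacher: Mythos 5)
Your proof is correct and follows essentially the same route as the paper: the paper's entire argument is that the hypothesis $(G^\vee)^{\phi_0}=T^\vee$ makes $W((G^{\vee})^{\phi_0},T^{\vee})$ trivial, so Theorem \ref{thm:DLint} gives $\sum_\pi m(\pi)^2=1$ and hence $R_T(\theta)=\pm\pi$. Your first paragraph --- checking via part (2) of the proposition on rigid parameters that $N_{Z_{G^\vee}(\phi_0)}(T^\vee)=Z_{G^\vee}(\phi_0)_0=T^\vee$, so that all rigid parameters with inertial part equivalent to $\phi_0$ form one equivalence class and the packet is built from a single Deligne--Lusztig virtual character --- is an additional verification the paper leaves implicit, and it is a genuine point worth making, since the definition of $\Pi_{\phi_0}(G(k))$ a priori allows several non-conjugate pairs $(T,\theta)$ to contribute while the multiplicity formula only controls the constituents of one $R_T(\theta)$.
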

\begin{proof}
The hypothesis is equivalent to the triviality of $W((G^{\vee})^{\phi_0},T^{\vee})$.
\end{proof}

Theorem \ref{thm:DLint} and Corollary \ref{cor:DLirr} suggest that the
size of the $L$-packet $\Pi_{\phi_0}(G(k))$ is controlled by the failure
of the (possibly disconnected) reductive group $(G^{\vee})^{\phi_0}$ to be
a torus. The first step is to enlarge the Weil group to the
Weil--Deligne group, which we introduce in the next section.

\section{Weil--Deligne groups and a Langlands correspondence for finite fields}\label{sec:WD} \setcounter{equation}{0}
We want to refine the partition of the representations $\Pi(G(k))$ in
\eqref{eq:gsLangconj}. We will turn next to a rough outline of the
idea introduced by Deligne for doing that.

The general shape of a Deligne's modified version of ``Langlands conjecture'' for group
representations is 
\begin{equation}\label{e:DelLangA}
\text{\parbox{9cm}{irreducible
      representations of $G(k)$ on $K$-vector spaces (up to
      equivalence) should fall into disjoint {\em packets}
      $\Pi_{\varphi}$ indexed by L-parameters
  $\varphi$ of Weil--Deligne type (up to conjugation by $G^{\vee}(K)$).}}
\end{equation}
The conjecture includes an idea about the structure of an
L-packet:
\begin{equation}\label{e:DelLangB}
\text{\parbox{9cm}{representations
  in a packet $\Pi_{\varphi}$ should be indexed approximately by some irreducible
  $G^{\vee}(K)$-equivariant local systems of $K$-vector spaces on $G^{\vee}(K)\cdot
  \varphi$; that is, by irreducible $K$-representations of the group of
  connected components $G^{\vee}(K)^{\varphi}/G^{\vee}(K)^{\varphi}_0$.}}
\end{equation}
An even more optimistic version (proven for real groups in \cite{ABVLangclareal})
is
\begin{equation}\label{e:DelLangC}
\text{\parbox{9cm}{the category of
      $G(k)$ representations built from a packet $\Pi_{\varphi}$ should
      be in duality with the category of $G^{\vee}(K)$-equivariant
      perverse sheaves on $G^{\vee}(K)\cdot\varphi$.}}
\end{equation}

In this conjecture, an {\em L-parameter of Weil--Deligne type} is an
algebraic group homomorphism 
\begin{equation}
  \varphi \colon \mathit{WD}_k \rightarrow {}^L G
\end{equation}
subject to requirements including
\begin{enumerate}
  \item $\im(\varphi|_{W_k})$ is semisimple;
\item $\varphi$ is compatible with the natural projections to 
  $\Gal(k^{\mathrm{sep}}/k)$. 

\end{enumerate}
In this definition of L-parameter of Weil--Deligne type, the group $\mathit{WD}_k$ is
a {\em Weil--Deligne group} for the field $k$.  We will not state
general requirements for a Weil--Deligne group, 
because we do not know how to
formulate them in a way consistent with our fond hope: that there is a
definition of something like an archimedean Weil--Deligne ``group'' (or
at least of its representations) that incorporates the modified notion
of ``Langlands parameter'' introduced in \cite{ABVLangclareal}. The same fond
hope asks also that this archimedean definition be made consistent with
Deligne's non-archimedean definition in \cite[8.3.6]{DelconstL}.

Here at any rate is a definition of a Weil--Deligne group for a finite field.  Each Weil L-parameter $\phi$ will have
several extensions to an L-parameter $\varphi$; we will
try to arrange a corresponding partition of each $L$-packet
$\Pi_\phi$ into several smaller packets.

\begin{definition}\label{def:finweilDel}
	The {\em Weil--Deligne group of the finite
		field $k$} is the locally pro-algebraic group
	scheme
		\begin{equation}
			\mathit{WD}_k =\Ga \rtimes W_k
		\end{equation}
over $\mathbb{Z}[1/p]$, where $(\sigma_q^n,w) \in W_k$ acts on
		$\Ga$ by the multiplication by $q^n$.

	Assume that the characteristic of $K$ is not $p$.
	An {\em L-parameter of Weil--Deligne type} is a locally
        pro-algebraic group homomorphism
		\begin{equation}
			\varphi \colon \mathit{WD}_k(K)\rightarrow {}^L G
		\end{equation}
		satisfying the following conditions:
		\begin{enumerate}
			\item The map $\varphi$ is compatible with projections to $\Gal (\overline{k}/k)$.
			\item $\varphi|_{I_k}$ factors to some finite quotient ${\mathbb
				F}_{q^m}^\times$ of $I_k$.
		\end{enumerate}
		We say that the L-parameter $\varphi$ is {\em special}
		if $\varphi|_{\Ga (K)}(1)$ is a special unipotent element of
		$(G^{\vee})^{\varphi (I_k)}$.
		We say that $\varphi$ is Frobenius semisimple if
		$\varphi(\sigma_q)$ is semisimple in ${}^L G$.
\end{definition}

\begin{lemma}\label{lem:pi0modZ}
Let $g=su \in G^{\vee}$ be the Jordan decomposition.
Then we have
\[
\pi_0 \left( Z_{Z_{G^{\vee}}(s)^{\circ}}(u)/
Z\left( Z_{G^{\vee}}(s)^{\circ} \right) \right)
\cong \pi_0 \left( Z_{Z_{G^{\vee}}(s)^{\circ}}(u)/Z(G^{\vee}) \right). 
\]
\end{lemma}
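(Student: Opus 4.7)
Set $H := Z_{G^\vee}(s)^\circ$. The first step is to fix the basic inclusions among the groups involved. Because $s$ is semisimple it lies in some maximal torus $T$ of $G^\vee$, and $T \subseteq Z_{G^\vee}(s)^\circ = H$; in particular $Z(G^\vee) \subseteq T \subseteq H$, and since $Z(G^\vee)$ is central in $G^\vee$ it is central in $H$, so $Z(G^\vee) \subseteq Z(H)$. The unipotent element $u$ commutes with $s$ and therefore lies in $Z_{G^\vee}(s)$; every unipotent element of an algebraic group lies in its identity component, so $u \in H$. Then $Z(H) \subseteq Z_H(u)$ because $Z(H)$ centralizes all of $H$. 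The chain $Z(G^\vee) \subseteq Z(H) \subseteq Z_H(u)$ yields a short exact sequence of commutative algebraic groups
\[
1 \to Z(H)/Z(G^\vee) \to Z_H(u)/Z(G^\vee) \xrightarrow{\,q\,} Z_H(u)/Z(H) \to 1.
\]

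The plan is to show that $\pi_0(q)$ is an isomorphism. Surjectivity is automatic; for injectivity one must show that the image of $\pi_0(Z(H)/Z(G^\vee))$ in $\pi_0(Z_H(u)/Z(G^\vee))$ is trivial, equivalently that $Z(H) \subseteq Z(G^\vee) \cdot Z_H(u)^\circ$. The connected torus $Z(H)^\circ$ centralizes $u$, so it is contained in $Z_H(u)$ and, being connected, in $Z_H(u)^\circ$. The entire content of the claim therefore sits in the finite component group $Z(H)/Z(H)^\circ$: one must exhibit, for each of its cosets, a representative of the form $z'\cdot t$ with $z' \in Z(G^\vee)$ and $t \in Z_H(u)^\circ$.

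The main obstacle is this last step. A natural way to attack it is to fix $T \subseteq H$ a maximal torus of $G^\vee$ containing $s$, and to use the character-lattice identifications $X^*(Z(H)) \cong X^*(T)/\mathbb{Z}\Phi_H$ and $X^*(Z(G^\vee)) \cong X^*(T)/\mathbb{Z}\Phi$, where $\Phi_H \subseteq \Phi$ is the root subsystem of $H$, so that $X^*(Z(H)/Z(G^\vee)) \cong \mathbb{Z}\Phi/\mathbb{Z}\Phi_H$. The connectedness of $Z(H)/Z(G^\vee)$ amounts to torsion-freeness of this quotient; this is automatic when $H$ is a Levi, but can in principle fail for a general pseudo-Levi $H = Z_{G^\vee}(s)^\circ$. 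The essential input beyond formal group theory is then that $s$ itself lies in $Z(H)$ and commutes with $u$; coupled with the structure of the unipotent centralizer $Z_H(u)$ (via Steinberg's description of component groups in reductive centralizers), this should be what forces any residual class of $Z(H)/Z(G^\vee)$ to be absorbed into $Z_H(u)^\circ$ after translation by an element of $Z(G^\vee)$, completing the reduction.
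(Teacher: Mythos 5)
Your setup and reduction are correct, and they follow essentially the same route as the paper: writing $H=Z_{G^\vee}(s)^\circ$, everything comes down to comparing the images of $Z(H)$ and of $Z(G^\vee)$ in the component group of $Z_H(u)$ — your inclusion $Z(H)\subseteq Z(G^\vee)\cdot Z_H(u)^\circ$, where the paper instead proves the stronger statement that $\pi_0(Z(G^\vee))\to\pi_0(Z(H))$ is surjective by a character-lattice argument: taking the simple roots of $H$ to form a subset $I$ of the simple roots $\Delta$, torsion-freeness of $\mathbb{Z}\Delta/\mathbb{Z}I$ gives injectivity of $(X^*(T)/\mathbb{Z}I)_{\mathrm{tor}}\to(X^*(T)/\mathbb{Z}\Delta)_{\mathrm{tor}}$ and hence surjectivity on $\pi_0$ of the centers. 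But your write-up stops exactly where the content of the lemma begins: the closing paragraph ("this should be what forces any residual class \dots to be absorbed") is a hope, not an argument. You invoke Steinberg's description of component groups of unipotent centralizers without extracting anything from it, and no mechanism is offered by which the finite part of $Z(H)$ is pushed into $Z(G^\vee)\cdot Z_H(u)^\circ$. That missing step is the whole lemma, so the proposal has a genuine gap.

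Moreover, the pseudo-Levi worry you flag is not a technicality that more work would dissolve: when $H$ is a proper pseudo-Levi the inclusion you need can genuinely fail, and with it the asserted isomorphism, so the last step cannot be completed in the stated generality by any analysis of $Z_H(u)$ alone. Concretely, over $K$ of characteristic zero take $G^\vee=Sp_4$, let $s$ be the involution with eigenvalues $1,1,-1,-1$, so that $H=Z_{G^\vee}(s)=SL_2\times SL_2$, and let $u=(u_1,u_2)$ with each $u_i$ regular unipotent; writing $U_i$ for the root subgroup through $u_i$, one has $Z_H(u)=(\{\pm1\}U_1)\times(\{\pm1\}U_2)$, $Z(H)=\mu_2\times\mu_2$, while $Z(G^\vee)$ is the diagonal $\mu_2$. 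Then $Z_H(u)/Z(H)\cong U_1\times U_2$ is connected, but $Z_H(u)/Z(G^\vee)$ has two components, so the two $\pi_0$'s in the display differ, and indeed $Z(H)\not\subseteq Z(G^\vee)\cdot Z_H(u)^\circ$. This is exactly the point at which the paper's own proof assumes $I\subset\Delta$, i.e.\ that $H$ is a Levi (equivalently, that your $\mathbb{Z}\Phi/\mathbb{Z}\Phi_H$ is torsion-free), an assumption that does not hold for the pseudo-Levi centralizers $Z_{G^\vee}(\varphi(I_k))^\circ$ to which the lemma is later applied. So to turn your proposal into a proof you must first isolate such a hypothesis and then run the lattice computation you sketch; without it the key step is not merely unproved but false.
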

\begin{proof}
We take a Borel pair $T \subset B$ in $G_{k^{\sep}}$. We may assume that $s \in T^{\vee}$. Let $\Delta$ be the set of simple root of $G_{k^{\sep}}$ with respect to $T \subset B$. 
	Let $I \subset \Delta$ be the subset consisting of elements which are the simple coroots of $Z_{G^{\vee}}(s)^{\circ}$.
	Since $\mathbb{Z}\Delta/\mathbb{Z}I$ has no torsion, the natural map
	\[
	(X^*(T)/\mathbb{Z} I)_{\mathrm{tor}} \to
	(X^*(T)/\mathbb{Z} \Delta)_{\mathrm{tor}}
	\]is injective.
	Then $\pi_0 (Z(G^{\vee})) \to \pi_0 (Z(Z_{G^{\vee}}(s)^{\circ}))$ is surjective,
	since it is identified with
	\[
	(X^*(T)/\mathbb{Z} \Delta)_{\mathrm{tor}}^{\vee} \to
	(X^*(T)/\mathbb{Z} I)_{\mathrm{tor}}^{\vee}.
	\]
	Hence, we have
	\begin{align*}
		\pi_0 (Z_{Z_{G^{\vee}}(s)^{\circ}}(u)/Z(Z_{G^{\vee}}(s)^{\circ}))
		&\cong \pi_0 (Z_{Z_{G^{\vee}}(s)^{\circ}}(u))/\pi_0 (Z(Z_{G^{\vee}}(s)^{\circ}))\\
		&\cong \pi_0 (Z_{Z_{G^{\vee}}(s)^{\circ}}(u))/\pi_0 (Z(G^{\vee})) \\
		&\cong \pi_0 (Z_{Z_{G^{\vee}}(s)^{\circ}}(u)/Z(G^{\vee})),
	\end{align*}
	where we use the surjectivity of
	$\pi_0 (Z(G^{\vee})) \to \pi_0 (Z(Z_{G^{\vee}}(s)^{\circ}))$ at the second equality.
\end{proof}

We put
\[
 A_{Z_{G^{\vee}}(\varphi(I_k))^{\circ}}(\varphi(\Ga)) =
 \pi_0 (Z_{Z_{G^{\vee}}(\varphi(I_k))^{\circ}}(\varphi(\Ga))/Z(Z_{G^{\vee}}(\varphi(I_k))^{\circ})). 
\]
We define {\em Lusztig's canonical quotient $\overline{A}_{Z_{G^{\vee}}(\varphi(I_k))^{\circ}}(\varphi(\Ga))$
	of $A_{Z_{G^{\vee}}(\varphi(I_k))^{\circ}}(\varphi(\Ga))$} as in \cite[13.1]{LusChred} using the isomorphism given by Lemma \ref{lem:pi0modZ}.
We put $\varphi_0=\varphi|_{\Ga \times I_k}$ and
\begin{equation}
	A(\varphi_0)=\pi_0 \left( Z_{G^{\vee}}(\varphi_0)/Z(G^{\vee}) \right).
\end{equation}
 Further we put
\begin{equation}
	\overline{A}(\varphi_0)=A(\varphi_0)/\Ker ( A_{Z_{G^{\vee}}(\varphi(I_k))^{\circ}}(\varphi(\Ga)) \to \overline{A}_{Z_{G^{\vee}}(\varphi(I_k))^{\circ}}(\varphi(\Ga))).
\end{equation}
To obtain an enlargement of $\overline{A}(\varphi_0)$, we put
\begin{equation*}
	\widetilde{Z}(\varphi_0)=\{ (g,\sigma_q^m) \in {}^{L}G \mid
	\Ad ((g,\sigma_q^m))(\varphi_0 (x))=\varphi_0 (\Ad (\sigma_q^m)(x)) \ \textrm{for all $x \in \Ga \times I_k$}
	\} 
\end{equation*}
and 
\begin{equation}
	\widetilde{A}(\varphi_0)=\widetilde{Z}(\varphi_0) /
	\Ker (Z_{G^{\vee}}(\varphi_0) \to \overline{A}(\varphi_0)).
\end{equation}

We have 
$\varphi (\sigma_q) \in \widetilde{Z}(\varphi_0)$. 
Let $\overline{\varphi (\sigma_q)}$ be the image of 
$\varphi (\sigma_q)$ under the natural projection 
\[
\widetilde{Z}(\varphi_0) \to 
\widetilde{A}(\varphi_0). 
\]
We say that two 
L-parameters $\varphi$ and $\varphi'$ of Weil--Deligne type are equivalent 
if the following condition is satisfied: 
there is $g \in G^{\vee}$ such that 
$\Ad (g)(\varphi_0)=\varphi_0'$ and 
$\overline{\varphi (\sigma_q)}$ corresponds to 
$\overline{\varphi' (\sigma_q)}$ under the bijection 
\[
\widetilde{A}(\varphi_0) \cong 
\widetilde{A}(\varphi'_0) 
\]
induced by $\Ad (g)$, where 
$\varphi_0=\varphi|_{\Ga \times I_k}$ and 
$\varphi_0'=\varphi'|_{\Ga \times I_k}$. 
Let $\Phi_K(G)$ be the equivalence classes of 
Frobenius semisimple L-parameters over $K$ of $G$. 
We write $\Phi_K(G)_{\mathrm{sp}} \subset \Phi_K(G)$ for 
the equivalence classes of special ones. 

We put
\begin{equation}
	A_{\varphi} = Z_{\overline{A}(\varphi_0)} (\overline{\varphi (\sigma_q)}) . 
\end{equation} 
The following is a formulation of the Langlands correspondence for finite fields. 

\begin{conjecture}
There is a natural map 
\begin{equation}\label{eq:LLCfin}
\mathcal{L}_G \colon \Irr_{\overline{\mathbb{Q}}_{\ell}}(G(k)) \to 
\Phi_{\overline{\mathbb{Q}}_{\ell}}(G)_{\mathrm{sp}} 
\end{equation}
such that, for $\varphi \in \Phi_{\overline{\mathbb{Q}}_{\ell}}(G)_{\mathrm{sp}}$, 
we have a bijection between 
$\mathcal{L}_G^{-1}(\varphi)$ and 
$\Irr_{\overline{\mathbb{Q}}_{\ell}}(A_{\varphi})$. 
\end{conjecture}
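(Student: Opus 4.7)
The plan is to construct $\mathcal{L}_G$ by composing Lusztig's classification of $\Irr_{\overline{\mathbb{Q}}_{\ell}}(G(k))$ with the explicit dictionary between his parametrizing data and L-parameters of Weil--Deligne type set up in the preceding sections. Theorem \ref{thm:DL} and Definition \ref{def:Lpkt} already partition $\Irr_{\overline{\mathbb{Q}}_{\ell}}(G(k))$ into inertial L-packets $\Pi_{\phi_0}$ indexed by equivalence classes of inertial L-parameters. The task reduces to subdividing each $\Pi_{\phi_0}$ into finer packets indexed by the equivalence classes of Weil--Deligne extensions $\varphi$ of a given Weil L-parameter $\phi$ with $\phi|_{I_k} = \phi_0$, and then to putting $\mathcal{L}_G^{-1}(\varphi)$ in bijection with $\Irr(A_\varphi)$.

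For the semisimple/inertial step I would invoke Lusztig's Jordan decomposition of characters, $\pi \leftrightarrow (s,\pi_u)$, where $[s]$ is an $F$-stable semisimple class in $G^\vee$ and $\pi_u$ is a unipotent character of $Z_{G^\vee}(s)^F$. Unwinding the proof of Proposition \ref{prop:torparam}, the cyclic subgroup $\langle s \rangle \subset T^\vee$ corresponds to a homomorphism $I_k \to T^\vee$, and this homomorphism is precisely $\phi_0$; thus $Z_{G^\vee}(s) = Z_{G^\vee}(\phi_0)$ is the ambient reductive group appearing in Definition \ref{def:finweilDel}. The $\overline{\mathbb{Q}}_\ell$-formulation avoids the unnatural choices \eqref{e:choices} by packaging $s$ as a map out of $I_k$. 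Next, for the unipotent/Weil--Deligne step I would apply Lusztig's classification of unipotent characters: each $\pi_u$ corresponds to a pair $(u,\rho)$ with $u$ a unipotent element lying in a special class of $Z_{G^\vee}(\phi_0)^\circ$ and $\rho$ an irreducible representation of Lusztig's canonical quotient $\overline{A}_{Z_{G^\vee}(\phi_0)^\circ}(u)$, subject to a Frobenius-equivariance condition. I would then set $\varphi|_{W_k} = \phi$ and $\varphi|_{\Ga}(1) = u$, so that the specialness of $u$ is exactly the specialness condition of Definition \ref{def:finweilDel}, and Lemma \ref{lem:pi0modZ} identifies Lusztig's canonical quotient with the quotient used to build $\overline{A}(\varphi_0)$. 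The residual Frobenius data — the class of $\varphi(\sigma_q)$ acting on $(u,\rho)$ — is exactly $\overline{\varphi(\sigma_q)} \in \widetilde{A}(\varphi_0)$, and Clifford theory applied to the sequence $\overline{A}(\varphi_0) \hookrightarrow \widetilde{A}(\varphi_0) \twoheadrightarrow \langle \overline{\varphi(\sigma_q)}\rangle$ yields the required bijection between the fibre $\mathcal{L}_G^{-1}(\varphi)$ and $\Irr(Z_{\overline{A}(\varphi_0)}(\overline{\varphi(\sigma_q)})) = \Irr(A_\varphi)$.

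The main obstacle will be the precise matching, not the outline. Lusztig's classification is phrased in terms of the dual group over $\overline{\mathbb{F}}_q$ and uses a choice of isomorphism as in \eqref{e:choices} to convert a cyclic subgroup of $T^\vee(\overline{\mathbb{Q}}_\ell)$ into a single semisimple element of $T^\vee(\overline{\mathbb{F}}_q)$; verifying that every piece of his parametrization (families, the Springer correspondence, Lusztig's canonical quotient, the Fourier transform matrix, and in particular the Frobenius twist) descends naturally to data defined solely in terms of $\phi_0$ and $\varphi(\sigma_q)$ is genuinely delicate. A more serious difficulty is the dependence of Jordan decomposition of characters and of the well-behaved structure of $Z_{G^\vee}(\phi_0)^\circ$ on a good-prime hypothesis for $p$ relative to the root datum of $G$; without it the unipotent piece of $\varphi$ and Lusztig's families do not line up bijectively, which is why the construction in the forthcoming companion paper is established only under the good prime assumption. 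Finally, one must check that $\mathcal{L}_G$ is natural — independent of the rigidifying torus $T^\vee$, of the generator of $I_k/\Ker \phi_0$, and of representatives within $G^\vee$-equivalence classes — but this should be a bookkeeping exercise once the dictionary is in place.
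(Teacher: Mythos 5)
This statement is a \emph{conjecture} in the paper: the authors give no proof of it at all, and they explicitly defer the construction of $\mathcal{L}_G$ to a subsequent paper, and even there only under a good-prime hypothesis. So there is no argument in the paper for your proposal to be compared with; it has to stand on its own as a proof, and it does not. What you have written is the strategy the definitions of Section \ref{sec:WD} are visibly designed for (Jordan decomposition of characters for the inertial part, Lusztig's theory of unipotent characters for the $\Ga$-part, with $\widetilde{A}(\varphi_0)$, $\overline{\varphi(\sigma_q)}$ and $A_\varphi$ packaging the Frobenius twist), but the decisive steps are asserted rather than proved, and you yourself concede the two points on which everything hinges.

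Concretely: (i) Lusztig's classification of unipotent characters is not by pairs $(u,\rho)$ with $u$ a special unipotent element and $\rho$ an irreducible representation of the canonical quotient; it is by families, and the unipotent characters in the family attached to a special class $u$ are indexed by conjugacy classes of pairs $(x,\sigma)$ with $x$ in the canonical quotient (extended by Frobenius in the non-split case) and $\sigma$ an irreducible representation of the centralizer of $x$. Matching that set, family by family and compatibly with the Frobenius action, with the set of extensions $\varphi$ of a fixed Weil parameter together with $\Irr_{\overline{\mathbb{Q}}_{\ell}}(A_\varphi)$ is exactly the content of the conjecture; your appeal to ``Clifford theory for $\overline{A}(\varphi_0)\hookrightarrow\widetilde{A}(\varphi_0)$'' relates irreducible representations of the extended group to Frobenius-stable representations of $\overline{A}(\varphi_0)$, which is not the same bookkeeping and does not by itself produce the required fibration over the choices of $\overline{\varphi(\sigma_q)}$. (ii) The Jordan decomposition of characters is not known to be natural in general (its uniqueness, and its behaviour for disconnected centre and in bad characteristic, are genuine problems), so ``naturality of $\mathcal{L}_G$'' --- the one word in the conjecture meant to rule out choices like \eqref{e:choices} --- is not a bookkeeping exercise. (iii) As you note, the good-prime assumption enters essentially; an argument needing it proves a weaker statement than the one asserted. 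So the proposal is a reasonable research plan, consistent with what the authors announce, but it is not a proof of the conjecture.
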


\noindent
Naoki Imai\\
Graduate School of Mathematical Sciences, The University of Tokyo, 
3-8-1 Komaba, Meguro-ku, Tokyo, 153-8914, Japan \\
naoki@ms.u-tokyo.ac.jp\\

\noindent
David A. Vogan, Jr.\\
2-355, Department of Mathematics MIT, Cambridge, MA 02139, USA \\ 
dav@math.mit.edu


\begin{thebibliography}{ABV92}
	\providecommand{\url}[1]{\texttt{#1}}
	\providecommand{\urlprefix}{URL }
	\providecommand{\eprint}[2][]{\url{#2}}
	
	\bibitem[ABV92]{ABVLangclareal}
	J.~Adams, D.~Barbasch and D.~A. Vogan, Jr., The {L}anglands classification and
	irreducible characters for real reductive groups, vol. 104 of Progress in
	Mathematics, Birkh\"{a}user Boston, Inc., Boston, MA, 1992.
	
	\bibitem[Car85]{CarFinLie}
	R.~W. Carter, Finite groups of {L}ie type, Pure and Applied Mathematics (New
	York), John Wiley \& Sons, Inc., New York, 1985, conjugacy classes and
	complex characters, A Wiley-Interscience Publication.
	
	\bibitem[Del73]{DelconstL}
	P.~Deligne, Les constantes des \'equations fonctionnelles des fonctions {$L$},
	in Modular functions of one variable, {II} ({P}roc. {I}nternat. {S}ummer
	{S}chool, {U}niv. {A}ntwerp, {A}ntwerp, 1972), pp. 501--597. Lecture Notes in
	Math., Vol. 349, Springer, Berlin, 1973.
	
	\bibitem[DL76]{DeLuRep}
	P.~Deligne and G.~Lusztig, Representations of reductive groups over finite
	fields, Ann. of Math. (2) 103 (1976), no.~1, 103--161.
	
	\bibitem[DM20]{DiMiRepLie2nd}
	F.~Digne and J.~Michel, Representations of finite groups of {L}ie type, vol.~95
	of London Mathematical Society Student Texts, Cambridge University Press,
	Cambridge, second edn., 2020.
	
	\bibitem[Lus84a]{LusChred}
	G.~Lusztig, Characters of reductive groups over a finite field, vol. 107 of
	Annals of Mathematics Studies, Princeton University Press, Princeton, NJ,
	1984.
	
	\bibitem[Lus84b]{LusChredICM}
	G.~Lusztig, Characters of reductive groups over finite fields, in Proceedings
	of the {I}nternational {C}ongress of {M}athematicians, {V}ol. 1, 2 ({W}arsaw,
	1983), PWN, Warsaw, 1984 pp. 877--880.
	
	\bibitem[Lus88]{LusRepdis}
	G.~Lusztig, On the representations of reductive groups with disconnected
	centre, 168, pp. 10, 157--166, 1988, orbites unipotentes et
	repr\'{e}sentations, I.
	
	\bibitem[Mac80]{MacZetafingen}
	I.~G. Macdonald, Zeta functions attached to finite general linear groups, Math.
	Ann. 249 (1980), no.~1, 1--15.
	
	\bibitem[Wei51]{WeiSurcorpcl}
	A.~Weil, Sur la th\'{e}orie du corps de classes, J. Math. Soc. Japan 3 (1951),
	1--35.
	
\end{thebibliography}
\end{document}